\newcommand{\Hm}[1]{\leavevmode{\marginpar{\tiny%
$\hbox to 0mm{\hspace*{-0.5mm}$\leftarrow$\hss}%
\vcenter{\vrule depth 0.1mm height 0.1mm width \the\marginparwidth}%
\hbox to 0mm{\hss$\rightarrow$\hspace*{-0.5mm}}$\\\relax\raggedright
#1}}}
\newcommand{\eat}[1]{}
\newcommand{\IR}{{\mathbb{R}}}
\newcommand{\N}{{\mathbb{N}}}
\newcommand{\eps}{{\varepsilon}}
\newcommand{\eChar}{\begin{enumerate}[(i)]}
\newcommand{\eBr}{\begin{enumerate}[(1)]}
\newcommand{\LL}{\Delta}
\newcommand{\supp}{\operatorname{supp}}
\newcommand{\co}{\colon}
\newcommand{\D}{\mathcal{D}}
\newcommand{\Q}{\mathcal{Q}}
\newcommand{\F}{\mathcal{F}}
\newcommand{\ov}[1]{\overline{ #1}}
\title
{
Neumann semigroup, subgraph convergence, form uniqueness, stochastic completeness
and the Feller property
}
\author{Matthias Keller, Florentin M{\"u}nch, Rados{\l}aw K. Wojciechowski}
\date{\today}
\theoremstyle{plain}
\newtheorem{lemma}{Lemma}[section]
\newtheorem{theorem}[lemma]{Theorem}
\newtheorem{corollary}[lemma]{Corollary}
\theoremstyle{definition}
\newtheorem{definition}[lemma]{Definition}
\newtheorem{example}[lemma]{Example}
\newtheorem{rem}[lemma]{Remark}
\newcommand{\Deg}{\operatorname{Deg}}
\numberwithin{equation}{section}
\numberwithin{subsection}{section}
\numberwithin{theorem}{section}
\numberwithin{lemma}{section}
\numberwithin{proposition}{section}
\numberwithin{example}{section}
\numberwithin{no}{section}
\numberwithin{rem}{section}
\numberwithin{defn}{section}
\numberwithin{corollary}{section}
\begin{document}

%\frontmatter
%\title[Short Title]{The Full Title of an AMS Book or Monograph}
%\author[P. Carlson]{Name of the First Author (Peter Carlson)}
%\address[P. Carlson]{Author One address, line 1%
%\hspace*{\fill}\linebreak\indent%
%Author One address, line 2}%
%\curraddr[P. Carlson]{Author One, current address, line 1
%\hspace*{\fill}\linebreak
%\indent Author One, current address, line 2}%
%\email[P. Carlson]{P. Carlson@autherone-inst.de}
%\urladdr{http://www.authorone.uni-aone.de}
%\author{Name of the Second Author}
%\thanks{The Author thanks J. Smith}
%\subjclass{Primary 05C38, 15A15; Secondary 05A15, 15A18}
%\dedicatory{Dedicated to Professor XY}

%\begin{abstract}
%\end{abstract}

\maketitle
%\abstract
%\cleardoublepage
%\Abstract

%\section*{Abstract}
\begin{abstract}
%\Abstract
  %\pagenumbering{gobble}
We study heat kernel convergence of induced subgraphs with Neumann boundary conditions.
We first establish convergence of the resulting semigroups to the Neumann semigroup in $\ell^2$. 
While convergence to the Neumann semigroup always holds, convergence to the 
Dirichlet semigroup in $\ell^2$ turns out to be equivalent to the coincidence of the Dirichlet and Neumann semigroups
while convergence in $\ell^1$ is equivalent to stochastic completeness.
We then investigate the Feller property for the Neumann semigroup via generalized solutions and
give applications to graphs satisfying a condition on the edges as well as 
birth-death chains.
\end{abstract}
%\tableofcontents

%\section*{Preface}

%\markboth{PREFACE}{PREFACE} 
%I want to thank my family, my advisors Prof. Daniel Lenz and Dr. Matthias Keller and all other people who supported me while writing my Master article.

%\part{Li-Yau-Harnack inequalities on semigroups}

\pagestyle{plain}

%\addcontentsline{toc}{section}{Abstract}

%-------------------------------------------------------------------------------------------------------------------------------------------------------------------------------
%-------------------------------------------------------------------------------------------------------------------------------------------------------------------------------

%$\GjPsf{\ind}$
\section{Introduction}
It is well-known that when exhausting a space and taking heat semigroups with Dirichlet boundary
conditions, the resulting semigroups converge to the minimal semigroup on the entire space. For manifolds,
this was first established by Dodziuk in 1983 \cite{Dod83}; for graphs as studied
in this article by Keller/Lenz in 2012 \cite{KL12}. A main ingredient in proving
this convergence is the fact that both the heat semigroups and the resolvents increase as the domains
of the exhaustion increase as can be established by a minimum principle. 
This monotonicity yields a number of consequence, for example, that the Dirichlet semigroup
generates the minimal positive solution to the heat equation and the Dirichlet resolvent generates
the minimal positive solution to the Poisson equation.

When considering analogous questions for Neumann boundary conditions, this monotonicity
no longer holds. However, we can still study the question of what happens with convergence as
one exhausts the space and takes the limit. This is the aim of this article.

More specifically, we first establish by general theory
that the restricted Neumann semigroups and resolvents converge strongly
in $\ell^2$ to the Neumann semigroup and resolvent. As a consequence, we show that the
Neumann restrictions converge to the Dirichlet semigroup strongly in $\ell^2$
if and only if the Neumann and Dirichlet 
semigroups coincide.
As the semigroups and resolvents extend to all $\ell^p$ spaces, we can then consider
other notions of convergence.
In particular, we show that convergence
to the Dirichlet semigroup in $\ell^1$ coincides with stochastic completeness.

Next, we consider the Feller property for the Neumann semigroup. This property deals with the
phenomenon of heat vanishing at infinity. 
We show that the Neumann semigroup is Feller if and only if the Dirichlet
semigroup is Feller and the semigroups coincide. We then further analyze this property 
via $\alpha$-superharmonic functions and give additional connections to stochastic completeness.

Finally, we apply our results to two special classes of graphs. The first class is graphs that satisfy
an edge condition as introduced in \cite{KM}. Here, we establish that, when such graphs 
are locally finite, the Neumann semigroup is Feller.
In the final section, we characterize the Feller property of the Neumann semigroup for birth-death chains
and provide a connection to essential self-adjointness.

We note that many of our methods are robust enough to hold in a much more general setting,
including that of manifolds.

\section{The Setting}
We study graphs as in \cite{KL12, KLW21}. More specifically, we let $X$ be a countable
discrete set of vertices with a measure $m \co X \longrightarrow (0,\infty)$ of full support.
We let the edge structure be given by a map
 $b \co X \times X \longrightarrow [0,\infty)$ which satisfies
 $b(x,y)=b(y,x)$, $b(x,x)=0$ and $\sum_{y \in X} b(x,y)<\infty$. Finally, we let
 $c \co X \longrightarrow [0,\infty)$ denote a killing term and call $(b,c)$ a \emph{graph}
 over $(X,m)$. If $c=0$, we will write just $b$ for a graph over $(X,m)$.
 We call a graph \emph{locally finite} if for every $x \in X$, there exist
 only finitely many $y \in X$ such that $b(x,y)>0$.
 
 We will assume that all graphs are connected, that is, for any pair of vertices $x,y \in X$, there
 exists a sequence of vertices $(x_k)_{k=0}^n$ such that $x_0=x, x_n=y$ and $b(x_k, x_{k+1})>0$
 for $k=0, 1, \ldots, n-1$. We call such a sequence $(x_k)$ a \emph{path} connecting $x$ and $y$.
  
 Throughout, we will consider exhaustions of the entire graph via subgraphs. More specifically, we 
 will call any sequence of finite sets $X_k \subseteq X$ an \emph{exhaustion} of the graph
 if $X_k \subseteq X_{k+1}$ and $X= \bigcup_{k=0}^\infty X_k$. We will assume that the graphs
 induced by $X_k$ are also connected, that is, if we restrict $b$ to $X_k \times X_k$, then the resulting
 graph is connected.
 
We now introduce the relevant function spaces, forms and operators. 
We let $C(X)= \{ f\co X \longrightarrow \IR\}$, $C_c(X) = \{ f\in C(X) \mid f \textup{ is finitely supported} \}$
and for $p \in [1,\infty)$,
$$\ell^p(X,m) =\{ f \in C(X) \mid \sum_{x \in X} |f(x)|^p m(x) < \infty \}$$
with norm $\| f\|_p^p = \sum_{x \in X} |f(x)|^p m(x).$ When $p=2$, we will often write $\|f\|$ for $\|f\|_2$.
We also let $\ell^\infty(X,m) = \ell^\infty(X) = \{ f\in C(X) \mid \sup_{x \in X} |f(x)| < \infty\}$ be the space
of bounded functions with norm $\|f\|_\infty = \sup_{x \in X} |f(x)|$. 

We let
$$\D = \{ f\in C(X) \mid \sum_{x,y \in X} b(x,y)(f(x)-f(y))^2 + \sum_{x \in X} c(x)f^2(x) < \infty \}$$
denote the functions of finite energy. For $f, g \in \D$, we let
$$\Q(f,g) = \frac{1}{2} \sum_{x,y \in X}b(x,y)(f(x)-f(y))(g(x)-g(y)) + \sum_{x \in X}c(x)f(x)g(x)$$
denote the energy form. 

There are two distinguished restrictions of the energy form which we will consider.
The first is the form $Q^{(D)}$ which is a restriction of $\Q$ to
$$D(Q^{(D)})= \ov{C_c(X)}^{\| \cdot \|_\Q}$$
where $\| f \|_\Q^2 = \Q(f) + \|f\|^2$. The other is $Q^{(N)}$ which is a restriction of $\Q$ to
$$D(Q^{(N)}) = \D \cap \ell^2(X,m).$$
We note that both $Q^{(D)}$ and $Q^{(N)}$ are Dirichlet forms
and, thus, the associated semigroups are positivity preserving. In fact,
when the underlying graph is connected, the semigroups are even positivity
improving, see Lemma~5.1 in Section~5.1 of \cite{KLW21}.
If $Q^{(D)}=Q^{(N)}$, then we say that the graph satisfies \emph{form uniqueness}, 
see \cite{HKLW12, KLW21} for more details.

We now introduce the corresponding formal operator and restrictions. 
We let $\F=\{ f\in C(X) \mid \sum_{y \in X}b(x,y)|f(y)| < \infty \textup{ for all } x \in X\}$
and for $f \in \F$ and $x \in X$, we let
$$\LL f(x) = \frac{1}{m(x)} \sum_{y \in X} b(x,y)(f(x)-f(y)) + \frac{c(x)}{m(x)} f(x)$$
denote the formal Laplacian. It follows from a Green's formula that the operators $L^{(D)}$
and $L^{(N)}$ coming from the forms $Q^{(D)}$ and $Q^{(N)}$ are restrictions of $\LL$ see,
e.g., Theorem~1.12 in Section~1.1 of \cite{KLW21}. 
We refer to $L^{(D)}$ as the \emph{Dirichlet Laplacian} and $L^{(N)}$
as the \emph{Neumann Laplacian}. 

Generalizing the preceeding, we say that a form $Q$ with domain $D(Q) \subseteq \ell^2(X,m)$ is \emph{associated
to the graph} if $Q$ is closed, 
$$D(Q^{(D)}) \subseteq D(Q) \subseteq D(Q^{(N)})$$ 
and if $Q(f) = \Q(f)$
for all $f \in D(Q)$. The operator that comes from such a form $Q$ is said to be \emph{associated
to the graph}. As for the Dirichlet and Neumann Laplacians,
it can be shown that all such operators are restrictions of $\LL$.

We now take an exhaustion sequence $(X_k)$ and consider two possible resulting forms
and Laplacians. 
For each $k$, we let $i_k \colon C(X_k) \longrightarrow C(X)$ denote extension by $0$. 
Then, we let the form $Q^{(D)}_k$
be defined on $\ell^2(X_k,m)$ by
$$Q^{(D)}_k(f) = \Q (i_k \circ f).$$
If we let $L^{(D)}_k$ denote the operator associated to $Q^{(D)}_k$, then a calculation gives $L^{(D)}_k f= \LL (i_k \circ f)$ 
and thus
$$L^{(D)}_k f(x) = \frac{1}{m(x)} \sum_{y \in X_k} b(x,y)(f(x)-f(y)) + 
\frac{1}{m(x)}\left(\sum_{y \not \in X_k}b(x,y) +c(x)\right)f(x)$$
for $x \in X_k$ and $f \in C(X_k)$.
We call the operator $L^{(D)}_k$ the \emph{Dirichlet restriction of the Laplacian} and the associated
semigroup $P_{t,k}^{(D)}=e^{-tL^{(D)}_k}$ the \emph{Dirichlet restriction of the semigroup}. We remark that the Dirichlet restriction
sees both edges within the exhaustion set and those that leave the set. 

As a second possible approximation, we now look at the form and Laplacian on the graph induced by the exhausting
sets $X_k$.
That is, we restrict $b$ to $X_k \times X_k$, restrict $c$ and $m$ to $X_k$ and consider the form and Laplacian
of the resulting graph. More specifically, we consider the form
$$Q^{(N)}_k(f) = \frac{1}{2} \sum_{x,y \in X_k}b(x,y)(f(x)-f(y))^2 + \sum_{x \in X_k}c(x)f^2(x)$$
and operator
$$L^{(N)}_k f(x) = \frac{1}{m(x)} \sum_{y \in X_k} b(x,y)(f(x)-f(y)) + \frac{c(x)}{m(x)}f(x)$$
for $x \in X_k$ and $f \in \ell^2(X_k, m)$. We call $L^{(N)}_k$ the \emph{Neumann restriction of the Laplacian}
and the semigroup $P_{t,k}^{(N)}=e^{-tL^{(N)}_k}$ the \emph{Neumann restriction of the semigroup}. 
We note that by restricting and extending by 0, we can think of both the forms $Q_k^{(D)}$
and $Q_k^{(N)}$ and semigroups $P_{t,k}^{(D)}$
and $P_{t,k}^{(N)}$ as being defined on all of $\ell^2(X,m)$.

We obtain from the formulas above and the Lie-Trotter product formula that
$$P_{t,k}^{(D)} \varphi \leq P_{t,k}^{(N)}\varphi$$
for all $\varphi \in \ell^2(X_k,m)$ with $\varphi \geq 0$.
We also remark that, in terms of the formal Laplacian on the entire graph, we can write
$$L^{(N)}_k f(x)= \LL (i_k \circ f)(x) - f(x)\LL 1_{X_k}(x)$$
for $x \in X_k$ and $f \in \ell^2(X_k,m)$, where $1_{X_k}$ is the indicator function of the set $X_k$.

\section{Approximation of the Neumann semigroup and resolvent} 
As mentioned in the introduction, the Dirichlet restrictions of the semigroup
converge strongly in $\ell^2(X,m)$ to the semigroup $P_{t}^{(D)}=e^{-tL^{(D)}}$ on the entire space.
The main tool in establishing this is \emph{domain monotonicity} which states that
$$P_{t,k}^{(D)} \varphi \leq P_{t,k+1}^{(D)} \varphi$$
for all $\varphi \in \ell^2(X_k,m)$ with $\varphi \geq 0$, see \cite{KL12, KLW21}
for details on this in the graph setting.

Domain monotonicity no longer holds in general for the Neumann restrictions.
However, we can still recover strong convergence for the Neumann restrictions as we will
see in this section. In order to prove this, we work with the Neumann resolvents and show 
that they converge strongly. By general theory linking resolvents and semigroups, it follows
that the semigroups converge strongly as well.

We note that the following result is a special case of general arguments found, e.g., as
Theorem~3.13a in Section~3, Chapter~8 of \cite{Kat95}. 
We give a proof here for the sake of completeness and for the convenience of the reader.

\begin{theorem}[Strong Neumann convergence]\label{thm:Neumann}
Let $(b,c)$ be a graph over $(X,m)$.
Let $L^{(N)}$ be the Neumann Laplacian with resolvent $R_\alpha^{(N)} = (L^{(N)}+\alpha)^{-1}$ for $\alpha>0$
and semigroup $P_{t}^{(N)}=e^{-tL^{(N)}}$ for $t \geq 0$.
Let $R^{(N)}_{\alpha,k} = (L^{(N)}_k+\alpha)^{-1}$ for $\alpha>0$
and $P_{t,k}^{(N)}=e^{-tL_k^{(N)}}$ for $t \geq 0$ be the restricted Neumann resolvents
and semigroups for an exhaustion $(X_k)$. Then,
\begin{itemize}
\item[\textup{(i)}] $R^{(N)}_{\alpha,k} \to R^{(N)}_\alpha$ strongly 
in $\ell^2(X,m)$ as $k \to \infty$ for every $\alpha>0$.
\item[\textup{(ii)}] $P_{t,k}^{(N)} \to P_{t}^{(N)}$ strongly
in $\ell^2(X,m)$ as $k \to \infty$ for every $t \geq 0$.
\end{itemize}

\end{theorem}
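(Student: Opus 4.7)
The plan is to deduce both statements from Mosco convergence of the approximating forms, via the standard equivalence between Mosco convergence of closed nonnegative forms and strong resolvent convergence of the associated self-adjoint operators. I would view each $Q_k^{(N)}$ as a form on $\ell^2(X,m)$ by setting $Q_k^{(N)}(f) = +\infty$ when $f$ is not supported in $X_k$. The two Mosco conditions to check are then: (M1) $\liminf_k Q_k^{(N)}(f_k) \geq Q^{(N)}(f)$ whenever $f_k \rightharpoonup f$ weakly in $\ell^2(X,m)$; and (M2) for every $f \in \ell^2(X,m)$ there exists a strong approximation $f_k \to f$ with $\limsup_k Q_k^{(N)}(f_k) \leq Q^{(N)}(f)$. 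Statement (i) follows from these by the cited theorem, and (ii) follows from (i) by the Trotter--Kato theorem for self-adjoint contraction semigroups.

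For (M1), the key observation is that weak convergence in $\ell^2(X,m)$ forces pointwise convergence $f_k(x) \to f(x)$ for every $x$, since $\delta_x \in \ell^2(X,m)$ (as $m(x) > 0$). Writing
\[
Q_k^{(N)}(f_k) = \frac{1}{2}\sum_{x,y \in X} 1_{X_k}(x) 1_{X_k}(y)\,b(x,y)(f_k(x)-f_k(y))^2 + \sum_{x \in X} 1_{X_k}(x)\,c(x)f_k(x)^2,
\]
each summand is nonnegative and, since $X_k$ exhausts $X$, converges pointwise in $(x,y)$ respectively in $x$ to the corresponding summand of $\Q(f)$. Fatou's lemma then yields $\liminf_k Q_k^{(N)}(f_k) \geq \Q(f) = Q^{(N)}(f)$ when $f \in D(Q^{(N)})$, and $\liminf_k Q_k^{(N)}(f_k) = +\infty$ otherwise. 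For (M2), the natural candidate is the truncation $f_k = f \cdot 1_{X_k}$: dominated convergence gives $f_k \to f$ in $\ell^2(X,m)$, and $Q_k^{(N)}(f_k)$ is visibly a sum over the subset $X_k \times X_k$ of the nonnegative terms appearing in $\Q(f)$, so $Q_k^{(N)}(f_k) \leq \Q(f) = Q^{(N)}(f)$ for $f \in D(Q^{(N)})$; the case $f \notin D(Q^{(N)})$ is vacuous.

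The one subtle point, and the reason Kato's Theorem VIII.3.13a (rather than a bare Mosco statement) is the right reference, is that the forms $Q_k^{(N)}$ are not densely defined on $\ell^2(X,m)$: their form domain is contained in $\ell^2(X_k,m)$. Strong resolvent and strong semigroup convergence must therefore be understood in the generalized sense in which the approximating operators act as zero on the orthogonal complement of $\ell^2(X_k,m)$. This is precisely the convention in the statement, under which $R_{\alpha,k}^{(N)}$ and $P_{t,k}^{(N)}$ are extended by $0$ from $\ell^2(X_k,m)$ to $\ell^2(X,m)$; with that interpretation, the two Mosco inequalities above deliver both (i) and (ii).
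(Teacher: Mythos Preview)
Your argument via Mosco convergence is correct. The verification of (M1) by Fatou's lemma, using that weak convergence in $\ell^2(X,m)$ implies pointwise convergence because each $1_x$ lies in $\ell^2(X,m)$, and of (M2) via the truncation $f_k = f\cdot 1_{X_k}$, are both sound; you also rightly flag that the form domains $\ell^2(X_k,m)$ are not dense in $\ell^2(X,m)$, so that strong resolvent convergence has to be read in the generalized sense where $R_{\alpha,k}^{(N)}$ acts as zero on $\ell^2(X_k,m)^\perp$, which is precisely the paper's convention of restricting and extending by zero.

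The paper proceeds differently. It too points to Kato's theorem as the abstract source, but then gives a self-contained argument based on the monotonicity $Q_k^{(N)} \leq Q_{k+1}^{(N)} \leq Q^{(N)}$ rather than on the two Mosco inequalities. From the variational description of the resolvent one computes the minimum of $v \mapsto Q_k^{(N)}(v) + \alpha\|v - \alpha^{-1} f\|^2$ to be $\alpha^{-1}\|f\|^2 - \langle R_{\alpha,k}^{(N)} f, f\rangle$; monotonicity of the forms forces $\langle R_{\alpha,k}^{(N)} f, f\rangle$ to decrease in $k$, hence to converge, and polarization upgrades this to pointwise convergence of $R_{\alpha,k}^{(N)} f$. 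Fatou then identifies the limit with $R_\alpha^{(N)} f$, and strong convergence is obtained by setting $T_{\alpha,k} = R_{\alpha,k}^{(N)} - R_\alpha^{(N)} \geq 0$ and estimating $\|T_{\alpha,k} f\| \leq \|T_{\alpha,k}^{1/2}\|\,\|T_{\alpha,k}^{1/2} f\|$. Your route is shorter and packages the analytic work into the Mosco/Kato black box; the paper's route is longer but self-contained, and makes explicit that it is the monotone increase of the Neumann forms along the exhaustion that drives the convergence.
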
 
\begin{proof}
By general principles connecting semigroups and resolvents, (i) and (ii) are equivalent. Thus, it suffices
to prove (i).

Let $f \in \ell^2(X,m)$.
We prove (i) via the following claims:
\begin{enumerate}[(a)]
\item The resolvents $R_{\alpha,k}^{(N)} f$ converge pointwise to some $R_{\alpha}f$.
\item This limit coincides with $R_\alpha^{(N)} f$.
\item The resolvents $R_{\alpha,k}^{(N)} f$ also converge in $\ell^2(X,m)$.
\end{enumerate}

We start by proving claim (a).
Let $Q_k^{(N)}$ be the Neumann form corresponding to $L^{(N)}_k$ which
we think of as being defined on $\ell^2(X,m)$
and let $R_{\alpha,k}^{(N)}$ be the corresponding resolvent.

By general theory, $R_{\alpha,k}^{(N)} f$ is the unique minimizer of the functional
\[
\psi_{f,\alpha,k}(v)= Q_k^{(N)}(v) + \alpha \|v - \frac{1}{\alpha} f\|^2
\]
for $v \in D(Q^{(N)})$ see, e.g., Theorem~E.1 in \cite{KLW21}.
Similarly, $R_\alpha^{(N)} f$ is the unique minimizer of the functional
\[
\psi_{f,\alpha}(v)= Q^{(N)}(v) + \alpha \|v - \frac 1 \alpha f\|^2
\]
for $v \in D(Q^{(N)})$.

In the following, let $u = R_{\alpha,k}^{(N)}f= (L^{(N)}_k +\alpha)^{-1} f$ be the minimizer of 
$\psi_{f,\alpha,k}$. We remark that $L_k^{(N)} u = f- \alpha u$
%Let $\phi_{\alpha,k}(f)$ be the minimal value of the function $\psi_{f,\alpha,k}$, i.e.,
and use this to calculate
\begin{align*}
%\phi_{\alpha,k}(f) &=  
Q_k^{(N)}(u) + \alpha \|u - \frac 1 \alpha f\|^2  &=  \langle L_k^{(N)} u,u \rangle
 +  \alpha \langle u -  \frac 1 \alpha f, u -\frac 1 \alpha f \rangle \\
&= \langle f - \alpha u ,u \rangle - \langle f - \alpha u, u -\frac 1 \alpha f \rangle\\
&=\langle f- \alpha u, \frac 1 \alpha f \rangle = \frac 1 \alpha \|f\|^2 - \langle u ,f \rangle \\
&= \frac 1 \alpha \|f\|^2 - \langle R_{\alpha,k}^{(N)} f ,f \rangle.
\end{align*}
%The same calculation can be done for the minimal value $\phi_\alpha (f)$ of $\psi_{f,\alpha}$.
Since $Q_k^{(N)}$ is increasing in $k$, %so is $\phi_{\alpha,k}$, and therefore,  
it follows that $\langle R_{\alpha,k}^{(N)} f ,f \rangle$ is decreasing in $k$. As $R_{\alpha,k}^{(N)}$ is a positive operator, i.e., $\langle R_{\alpha,k}^{(N)} f ,f \rangle \geq 0$, it 
follows that $\langle R_{\alpha,k}^{(N)} f ,f \rangle$ converges to 
some $\langle R_\alpha f,f \rangle$.
By polarization, this yields the pointwise convergence of $R_{\alpha,k}^{(N)} f$ to $R_\alpha f$.
This finishes the proof of claim (a).

We next prove claim (b).
By Fatou's lemma, and using $L_k^{(N)} u = f- \alpha u$ for $u = R_{\alpha,k}^{(N)}f$ as
well as $\|R_{\alpha,k}^{(N)}f\| \leq \|f\|/\alpha$, we have
\[
Q^{(N)}(R_\alpha f) \leq \liminf_{k \to \infty} Q_k^{(N)}(R_{\alpha,k}^{(N)} f) = \liminf_{k \to \infty}
\langle f-\alpha u, R_{\alpha,k}^{(N)} f\rangle < \infty
\]
and
\[
\|R_\alpha f -\frac 1 \alpha f\|^2 \leq  \liminf_{k \to \infty} \|R_{\alpha,k}^{(N)} f - \frac 1 \alpha f\|^2 < \infty.
\]
In particular, $R_\alpha f \in D(Q^{(N)})$ and, letting $\phi_{\alpha,k}(f)$ denote the minimal value of the functional $\psi_{f,\alpha,k}$
and $\phi_{\alpha}(f)$ denote the minimal value of the functional $\psi_{f,\alpha}$, we obtain
\[
\psi_{f,\alpha}(R_\alpha f) \leq \liminf_{k \to \infty} \psi_{f,\alpha,k} (R_{\alpha,k}^{(N)}f) = \liminf_{k \to \infty} \phi_{\alpha,k}(f) \leq \phi_{\alpha}(f)
\]
where the latter inequality follows from $Q_k^{(N)} \leq Q^{(N)}$.
Therefore, $R_\alpha f$ is the unique minimizer of $\psi_{f,\alpha}$
and thus $R_\alpha f = R_\alpha^{(N)} f$ which proves claim (b).
In particular, $\langle R_{\alpha,k}^{(N)}f,f \rangle$ converges from above to $\langle R_\alpha^{(N)} f,f \rangle$ as 
$k \to \infty$.

We now prove claim (c), i.e., that $R_{\alpha,k}^{(N)}f$ converges to $R_\alpha^{(N)} f$ in $\ell^2(X,m)$ as $k \to \infty$.
Let $T_{\alpha,k}= R_{\alpha,k}^{(N)} - R_\alpha^{(N)}$. Then, $T_{\alpha,k} \geq 0$ is a bounded self-adjoint operator 
such that  $\|T_{\alpha,k}^{1/2}f\|^2 = \langle T_{\alpha,k} f, f \rangle \to 0$ as $k \to \infty$.
Thus,
\[
\|T_{\alpha, k} f\| \leq \|T_{\alpha,k}^{1/2}\| \cdot \|T_{\alpha,k}^{1/2} f\| \to 0
\]
as $k \to \infty$ since $\|T_{\alpha,k}^{1/2}\|$ is uniformly bounded in $k$ as $\|R_{\alpha,k}^{(N)}\| \leq 1/\alpha$.
Therefore,
$R_{\alpha,k}^{(N)}f$ converges to $R_\alpha^{(N)} f$ in $\ell^2(X, m)$ as $k \to \infty$ which proves claim (c)
and the theorem.
%Since $f$ is arbitrary, this yields strong convergence of $R_{\alpha,k}^{(N)}$ to $R_\alpha^{(N)}$ which 
%completes the proof.
\end{proof}

\begin{rem}
We note that, in contrast to the Dirichlet semigroup, the Neumann semigroup will not, in general,
generate the largest positive solution of the heat equation and the Neumann resolvent will not, in general,
generate the largest positive solution to the Poisson equation. This can be inferred from characterizations
of all Dirichlet forms between the Dirichlet and Neumann forms via boundary representations, see \cite{KLSS19},
as well as domination of semigroup results presented in \cite{LSW21}.
\end{rem}

Having established that the restricted Neumann semigroups always converge strongly to the Neumann
semigroup in $\ell^2$, we now look at convergence to the Dirichlet semigroup. %We first look at strong convergence
%in $\ell^2$, equivalently, at pointwise convergence.

\begin{corollary}[Characterization of form uniqueness]\label{cor:form_uniqueness}
Let $(b,c)$ be a connected graph over $(X,m)$.
%Let $0 \not = \varphi \in C_c(X)$. 
Let $t>0$. The following statements are equivalent:
\begin{itemize}
\item[\textup{(i)}] $Q^{(D)}=Q^{(N)}$.
\item[\textup{(ii)}] $P_{t,k}^{(N)} \to P_{t}^{(D)}$ strongly in $\ell^2(X,m)$
as $k \to \infty$.
%$P_{t,k}^{(N)} \varphi  \stackrel{\|\cdot\|_2}{\longrightarrow} P_{t}^{(D)} \varphi$ %in $\ell^2(X,m)$ \quad as $k \to \infty$. 
\item[\textup{(iii)}] $P_{t,k}^{(N)} \varphi \to P_{t}^{(D)} \varphi$ pointwise 
as $k \to \infty$ for some (all) $\varphi \in C_c(X)$ such that $\varphi \not = 0$. 

\end{itemize}
\end{corollary}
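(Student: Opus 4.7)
I will establish the cyclic chain (i)$\Rightarrow$(ii)$\Rightarrow$(iii)$\Rightarrow$(i). The first two implications reduce directly to Theorem~\ref{thm:Neumann}. For (i)$\Rightarrow$(ii), form uniqueness $Q^{(D)} = Q^{(N)}$ identifies the associated Laplacians, hence the semigroups, so $P_t^{(D)} = P_t^{(N)}$, and Theorem~\ref{thm:Neumann}(ii) then gives strong convergence $P_{t,k}^{(N)} \to P_t^{(N)} = P_t^{(D)}$ in $\ell^2(X,m)$. For (ii)$\Rightarrow$(iii), strong $\ell^2(X,m)$ convergence implies pointwise convergence at every vertex, since pointwise evaluation is a bounded functional: $|f(x)|^2 m(x) \leq \|f\|^2$.

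The substantive implication is (iii)$\Rightarrow$(i). Theorem~\ref{thm:Neumann}(ii) already provides $P_{t,k}^{(N)}\varphi \to P_t^{(N)}\varphi$ pointwise, so combining this with (iii) and uniqueness of pointwise limits yields the identity $P_t^{(N)}\varphi = P_t^{(D)}\varphi$. It therefore suffices to show that this single equation, valid for a nonzero $\varphi \in C_c(X)$, forces $P_t^{(N)} = P_t^{(D)}$ on $\ell^2(X,m)$; from there functional calculus for positive self-adjoint operators yields $L^{(N)} = L^{(D)}$ and hence $Q^{(N)} = Q^{(D)}$.

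To deduce $P_t^{(N)} = P_t^{(D)}$ from the vanishing of $A := P_t^{(N)} - P_t^{(D)}$ at $\varphi$, I will exploit the positivity-preservation of $A$ (from $P_t^{(D)} \leq P_t^{(N)}$ on the positive cone, recorded at the end of Section~2) together with the semigroup identity
\[
A = P_{t/2}^{(N)} C + C\,P_{t/2}^{(D)}, \qquad C := P_{t/2}^{(N)} - P_{t/2}^{(D)},
\]
which expresses $A$ as a sum of two positivity-preserving operators. Applied to a nonnegative $\varphi$, both summands are individually nonnegative and sum to zero, so each vanishes; in particular $C(P_{t/2}^{(D)}\varphi) = 0$. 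Since the graph is connected, $P_{t/2}^{(D)}$ is positivity-improving, so $P_{t/2}^{(D)}\varphi$ is strictly positive at every vertex and dominates a positive multiple of each $\delta_x$. Positivity-preservation of $C$ then forces $C\delta_x = 0$ for every $x \in X$, whence $C = 0$ on $C_c(X)$ and, by boundedness, on all of $\ell^2(X,m)$. This gives $P_{t/2}^{(N)} = P_{t/2}^{(D)}$, which is what was needed.

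The main obstacle I anticipate is handling sign-changing $\varphi$ in the ``some $\varphi \neq 0$'' version of (iii): decomposing $\varphi = \varphi^+ - \varphi^-$ yields only $A\varphi^+ = A\varphi^- \geq 0$, not that either side vanishes. The cleanest remedy is to pass to a nonnegative witness by exploiting the positivity-improving property: applying a further $P_s^{(D)}$ to the identity $A\varphi = 0$ and examining the associated strictly positive function $P_{t/2}^{(D)}|\varphi|$ should allow one to rerun the decomposition above on nonnegative data, thereby reducing the sign-changing case to the nonnegative one handled in the previous paragraph.
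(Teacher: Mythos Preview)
Your cycle (i)$\Rightarrow$(ii)$\Rightarrow$(iii) matches the paper exactly. For (iii)$\Rightarrow$(i) with nonnegative $\varphi$ your argument is correct but takes a genuinely different route. The paper argues by contraposition: assuming $Q^{(D)}\neq Q^{(N)}$, it invokes the proof of Theorem~1.26 in \cite{KLW21} to conclude that the heat-kernel difference is strictly positive everywhere, i.e., $(P_t^{(N)}-P_t^{(D)})1_x>0$ for every $x$, whence $(P_t^{(N)}-P_t^{(D)})\varphi>0$ by linearity and positivity of $\varphi$. Your telescoping identity $A=P_{t/2}^{(N)}C+CP_{t/2}^{(D)}$, combined with positivity-improving of $P_{t/2}^{(D)}$ and the squeeze $0\le \psi(x)1_x\le \psi$ for $\psi=P_{t/2}^{(D)}\varphi$, is self-contained and avoids the external citation, at the price of a few more lines and the extra passage through functional calculus to recover $Q^{(D)}=Q^{(N)}$ from $P_{t/2}^{(D)}=P_{t/2}^{(N)}$. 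Both arguments ultimately rest on the positivity-improving property of the semigroup on a connected graph.

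On sign-changing $\varphi$: the paper is as terse as you are, writing only ``without loss of generality $\varphi\geq 0$''. Your proposed remedy, however, does not work as stated: $P_s^{(D)}$ does not commute with $A=P_t^{(N)}-P_t^{(D)}$ unless the semigroups already coincide, so applying $P_s^{(D)}$ to $A\varphi=0$ produces nothing usable, and there is no general principle forcing the kernel of a self-adjoint positivity-improving operator to be trivial (the $2\times 2$ all-ones matrix annihilates $(1,-1)^{\top}$). In effect, both the paper and your proof close the cycle through (iii) with a \emph{nonnegative} witness; the ``some $\varphi$'' clause for a genuinely sign-changing $\varphi$ is not separately treated in either argument.
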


\begin{proof}
The implication (i) $\Longrightarrow$ (ii) follows from Theorem~\ref{thm:Neumann}
and (ii) $\Longrightarrow$ (iii) is trivial.

We now show  (iii) $\Longrightarrow$ (i).
Without loss of generality we assume $\varphi \geq 0$. Suppose that $Q^{(D)} \neq Q^{(N)}$. 
Then, there exist vertices $x, y \in X$
such that $P_{t}^{(D)}1_x(y) < P_{t}^{(N)}1_x(y)$ for some $t>0$. 
By connectedness of the graph and positivity improving 
properties of the semigroup, it now follows that $P_{t}^{(D)}1_x < P_{t}^{(N)}1_x$ for all $x \in X$, e.g., compare the proof of Theorem~1.26
in Section~1.4 of \cite{KLW21}. By the positivity of $\varphi$
and symmetry of the semigroup, the conclusion now follows.
\end{proof}

\begin{rem}
We note that form uniqueness is also equivalent to Markov uniqueness, i.e., that the Laplacian
has a unique realization whose form is a Dirichlet form, see \cite{Sch20} or Theorem~3.12 in Section~3.3 of
\cite{KLW21}. 
\end{rem}

\section{Stochastic Completeness}
Having established that the convergence of the restricted Neumann semigroups to the Dirichlet
semigroup pointwise is equivalent to form uniqueness, we now consider another form of convergence,
namely, strong convergence in $\ell^1$. 
We will see that this form of convergence is equivalent
to stochastic completeness.

For this, we let $c=0$ and recall that the Dirichlet heat semigroup extends to all $\ell^p(X,m)$ spaces
including $\ell^\infty(X)$, see Section~2.1 in \cite{KLW21} for details. 
We say that a graph is \emph{stochastically complete} if
$$P_{t}^{(D)}1=1$$
for some (all) $t >0$ where $1$ denotes the constant function which is $1$ on every vertex.

We make some preliminary remarks before proving our characterization of stochastic completeness
in terms of exhaustions. First, we note that stochastic completeness is equivalent to the preservation of
$\ell^1$ norms, i.e.,
$$\|P_{t}^{(D)} \varphi\|_1 = \|\varphi\|_1$$
for some (all) $\varphi \in C_c(X)$ with $\varphi \geq 0$, $\varphi \neq 0$ and for some (all) $t \geq 0$.
Secondly, we note that the Neumann restricted Laplacians are graph Laplacians for finite
graphs that have no killing term as we have assumed $c=0$. Thus, 
the associated semigroups are stochastically complete, i.e.,
$$\|P_{t,k}^{(N)}\varphi\|_1 = \|\varphi\|_1$$
for every $\varphi \in C_c(X)$ with $\varphi \geq 0$ and every $t \geq 0$ see, e.g., Theorem~0.65 in Section~0.8 of \cite{KLW21}.
Finally, as noted previously, for $\varphi \geq 0$, we have
$$P_{t,k}^{(D)} \varphi \leq P_{t,k}^{(N)} \varphi$$
and
$$P_{t,k}^{(D)}\varphi \leq P_{t}^{(D)}\varphi$$
for all $t \geq 0$ and all $k$.

Using these observations, we now characterize stochastic completeness.

\begin{theorem}[Characterization of stochastic completeness]\label{thm:SC}
Let $b$ be a connected graph over $(X,m)$.
%Let $0 \not= \varphi \in C_c(X)$. 
Let $t > 0$.
The following statements are equivalent:
\begin{itemize}
\item[\textup{(i)}]
The graph $b$ over $(X,m)$ is stochastically complete.
\item[\textup{(ii)}]
$P_{t,k}^{(N)} \to P_{t}^{(D)}$ strongly in $\ell^1(X,m)$ as $k \to \infty$.
%$P_{t,k}^{(N)} \varphi   \stackrel{\|\cdot\|_1}{\longrightarrow} P_{t}^{(D)} \varphi \quad \mbox{ as } k \to \infty $.
\end{itemize}

\end{theorem}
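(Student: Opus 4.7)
The plan is to reduce to test functions $\varphi \in C_c(X)$ with $\varphi \geq 0$ and exploit the sandwich $P_{t,k}^{(D)}\varphi \leq P_{t,k}^{(N)}\varphi$ together with the fact, recorded before the theorem, that the Neumann restrictions preserve the $\ell^1$-norm on nonnegative functions. The key identity is that for two nonnegative functions $f \leq g$, $\|g-f\|_1 = \|g\|_1 - \|f\|_1$, which lets one translate pointwise domination into quantitative $\ell^1$-control.

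For the implication (ii) $\Longrightarrow$ (i), fix any $\varphi \in C_c(X)$ with $\varphi \geq 0$, $\varphi \not= 0$. Since $P_{t,k}^{(N)}$ is the heat semigroup of a finite graph with no killing, it is stochastic, so $\|P_{t,k}^{(N)}\varphi\|_1 = \|\varphi\|_1$ for every $k$. Passing to the $\ell^1$-limit gives $\|P_{t}^{(D)}\varphi\|_1 = \|\varphi\|_1$, which by the standard characterization recalled just before the theorem is equivalent to stochastic completeness.

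For (i) $\Longrightarrow$ (ii), first observe that both $P_t^{(D)}$ and each $P_{t,k}^{(N)}$ are $\ell^1$-contractions: the Neumann restriction because it is Markov on a finite space, and the Dirichlet semigroup by general Markov semigroup theory. By density of $C_c(X)$ in $\ell^1(X,m)$ and a three-epsilon argument, it suffices to prove convergence for $\varphi \in C_c(X)$ with $\varphi \geq 0$. Write
\[
\|P_{t,k}^{(N)}\varphi - P_t^{(D)}\varphi\|_1 \leq \|P_{t,k}^{(N)}\varphi - P_{t,k}^{(D)}\varphi\|_1 + \|P_{t,k}^{(D)}\varphi - P_t^{(D)}\varphi\|_1.
\]
Since $0 \leq P_{t,k}^{(D)}\varphi \leq P_{t,k}^{(N)}\varphi$ and $\|P_{t,k}^{(N)}\varphi\|_1 = \|\varphi\|_1$, the first summand equals $\|\varphi\|_1 - \|P_{t,k}^{(D)}\varphi\|_1$. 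By domain monotonicity the Dirichlet restrictions satisfy $P_{t,k}^{(D)}\varphi \nearrow P_t^{(D)}\varphi$ pointwise, so monotone convergence gives $\|P_{t,k}^{(D)}\varphi\|_1 \to \|P_t^{(D)}\varphi\|_1$, and stochastic completeness forces the limit to be $\|\varphi\|_1$. For the second summand, the difference $P_t^{(D)}\varphi - P_{t,k}^{(D)}\varphi$ is nonnegative, decreases pointwise to $0$, and is dominated by $P_t^{(D)}\varphi \in \ell^1(X,m)$, so dominated (or monotone) convergence gives the $\ell^1$-limit $0$.

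The content of the argument is minimal once the right decomposition is seen; the mild obstacle is really bookkeeping: making sure that one can reduce from $\ell^1(X,m)$ to nonnegative $C_c$ using contractivity in $\ell^1$, and that $P_t^{(D)}\varphi$ is genuinely in $\ell^1(X,m)$ so that monotone/dominated convergence applies (the latter being automatic since $P_t^{(D)}$ is an $\ell^1$-contraction). The rest is the telescoping $\|g-f\|_1 = \|g\|_1-\|f\|_1$ applied to the Dirichlet/Neumann sandwich.
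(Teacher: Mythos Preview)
Your proof is correct and follows essentially the same approach as the paper: the same triangle-inequality decomposition through $P_{t,k}^{(D)}\varphi$, the same use of $\|g-f\|_1=\|g\|_1-\|f\|_1$ for $0\leq f\leq g$, and the same appeal to monotone convergence of the Dirichlet restrictions. The only cosmetic differences are that the paper combines both summands into $2(\|\varphi\|_1-\|P_{t,k}^{(D)}\varphi\|_1)$ and argues (ii)$\Rightarrow$(i) by contraposition, while you handle the two terms separately and pass to the limit directly; your explicit density/three-epsilon reduction to nonnegative $\varphi\in C_c(X)$ is a point the paper leaves implicit.
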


\begin{proof}
We let $\varphi \in C_c(X)$ with $\varphi \geq 0$ and $ \varphi \neq 0$.

We first prove (i) $\Longrightarrow$ (ii). We use the triangle inequality along with the properties
above to estimate
\begin{align*}
\| P_{t,k}^{(N)} \varphi -  P_{t}^{(D)} \varphi  \|_1  &\leq \| P_{t,k}^{(N)} \varphi  -  P_{t,k}^{(D)} \varphi  \|_1 
+ \| P_{t}^{(D)} \varphi  -  P_{t,k}^{(D)}\varphi  \|_1 \\
&=  \| P_{t,k}^{(N)} \varphi  \|_1 - \|  P_{t,k}^{(D)} \varphi  \|_1 +  \|  P_{t}^{(D)} \varphi  \|_1 - \| P_{t,k}^{(D)}  \varphi \|_1 \\
&= 2(\|\varphi\|_1 -  \|  P_{t,k}^{(D)} \varphi  \|_1) 
\end{align*}
where the first equality follows from  $\varphi \geq 0$ and from the facts discussed
directly above the theorem and the second equality follows from stochastic completeness.
Moreover, due to stochastic completeness we have $\|P_{t}^{(D)} \varphi\|_1 = \|\varphi\|_1$ and since $\varphi \geq 0$ 
$$ \|\varphi\|_1 -  \|  P_{t,k}^{(D)} \varphi  \|_1 =
\| P_{t}^{(D)}\varphi\|_1 -  \|  P_{t,k}^{(D)} \varphi  \|_1=\| P_{t}^{(D)}\varphi- P_{t,k}^{(D)} \varphi  \|_1.$$ 
This can be made arbitrarily small when choosing $k$ large enough due to monotone convergence. 
This proves (i) $\Longrightarrow$ (ii).

We now prove (ii) $\Longrightarrow$ (i). Suppose that $b$ over $(X,m)$ is stochastically incomplete.
We observe
\[
\| P_{t,k}^{(N)}\varphi  -  P_{t}^{(D)} \varphi  \|_1 \geq \|P_{t,k}^{(N)} \varphi  \|_1 - \|  P_{t}^{(D)} \varphi  \|_1 
= \|\varphi\|_1 -  \|  P_{t}^{(D)} \varphi  \|_1 >0
\]
independently of $k$ due to the stochastic incompleteness and connectedness of the graph. 
Therefore, $P_{t,k}^{(N)} \varphi $ cannot converge to  $P_{t}^{(D)} \varphi$ in $\ell^1(X,m)$ which contradicts (ii).
This finishes the proof.
\end{proof}

We present a corollary of the preceding results for graphs of finite measure, 
i.e., when $m(X)<\infty$, see \cite{GHKLW15}.
In this case, convergence in $\ell^2$ always implies convergence in
$\ell^1$. Our results above give that the converse holds for the Neumann
restricted semigroups converging to the Dirichlet semigroup.
\begin{corollary}[Finite measure and semigroup convergence]
Let $b$ be a connected graph over $(X,m)$ with $m(X)<\infty$. Let $t>0$.
The following statements are equivalent:
\begin{itemize}
\item[\textup{(i)}]  $P_{t,k}^{(N)} \to P_{t}^{(D)}$ strongly
in $\ell^1(X,m)$ as $k \to \infty$.
\item[\textup{(ii)}] $P_{t,k}^{(N)} \to P_{t}^{(D)}$ strongly
in $\ell^2(X,m)$ as $k \to \infty$.
\end{itemize}
\end{corollary}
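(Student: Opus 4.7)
The plan is to derive (ii) $\Rightarrow$ (i) from the finite-measure embedding $\ell^2 \hookrightarrow \ell^1$, and to derive (i) $\Rightarrow$ (ii) by first extracting pointwise convergence from strong $\ell^1$ convergence, invoking Corollary~\ref{cor:form_uniqueness} to conclude that $P_t^{(D)} = P_t^{(N)}$, and then applying Theorem~\ref{thm:Neumann} to obtain strong $\ell^2$ convergence of $P_{t,k}^{(N)}$ to this common limit.

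For the direction (ii) $\Rightarrow$ (i), the Cauchy--Schwarz inequality gives $\|f\|_1 \leq \sqrt{m(X)}\,\|f\|_2$ for every $f \in \ell^2(X,m)$, so applying this to $f = P_{t,k}^{(N)}\varphi - P_t^{(D)}\varphi$ with $\varphi \in C_c(X)$ turns strong convergence in $\ell^2$ into convergence in $\ell^1$ on the dense subspace $C_c(X) \subseteq \ell^1(X,m)$. A standard $\eps/3$ argument, together with the fact that the operators $P_{t,k}^{(N)}$ and $P_t^{(D)}$ are $\ell^1$-contractions (their forms are Dirichlet forms, so the semigroups are sub-Markovian in every $\ell^p$), then extends the convergence to all of $\ell^1(X,m)$.

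For (i) $\Rightarrow$ (ii), I would fix any nonzero $\varphi \in C_c(X) \subseteq \ell^1(X,m)$. The pointwise bound $|f(x)| \leq \|f\|_1/m(x)$ shows that strong $\ell^1$ convergence forces pointwise convergence $P_{t,k}^{(N)}\varphi(x) \to P_t^{(D)}\varphi(x)$ for every $x \in X$. Corollary~\ref{cor:form_uniqueness} then yields form uniqueness $Q^{(D)} = Q^{(N)}$, so in particular $P_t^{(D)} = P_t^{(N)}$. Theorem~\ref{thm:Neumann} now provides $P_{t,k}^{(N)} \to P_t^{(N)} = P_t^{(D)}$ strongly in $\ell^2(X,m)$, which is (ii).

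The main obstacle, such as it is, lies in the density/contraction step of (ii) $\Rightarrow$ (i), where one needs a uniform $\ell^1$-bound on $P_{t,k}^{(N)}$. This is routine once one observes that each $Q_k^{(N)}$ is a Dirichlet form on a finite graph without killing term (since $c = 0$ is assumed throughout this section), so the Lie--Trotter product formula forces the corresponding semigroup to be Markovian and hence an $\ell^1$-contraction; everything else in the argument is a direct application of the two preceding results.
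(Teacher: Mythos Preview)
Your argument is correct, but it takes a different route from the paper. The paper invokes an external result of Schmidt stating that, for graphs with $m(X)<\infty$, stochastic completeness and form uniqueness are equivalent; it then reads off (i) $\Longleftrightarrow$ stochastic completeness from Theorem~\ref{thm:SC} and (ii) $\Longleftrightarrow$ form uniqueness from Corollary~\ref{cor:form_uniqueness}, and chains these equivalences.

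Your approach bypasses both Schmidt's theorem and Theorem~\ref{thm:SC} entirely. The implication (ii) $\Rightarrow$ (i) comes directly from the embedding $\ell^2(X,m)\hookrightarrow\ell^1(X,m)$ together with uniform $\ell^1$-contractivity of the approximating semigroups, while (i) $\Rightarrow$ (ii) extracts pointwise convergence from $\ell^1$ convergence and then feeds this into Corollary~\ref{cor:form_uniqueness} and Theorem~\ref{thm:Neumann}. This is more self-contained and arguably more elementary. What the paper's route gains in exchange is conceptual content: it identifies each of (i) and (ii) with an intrinsic property of the graph (stochastic completeness and form uniqueness, respectively), and in doing so recovers the finite-measure equivalence of these two properties as a byproduct, whereas your argument establishes the equivalence of the two convergence statements without passing through stochastic completeness at all.
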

\begin{proof}
When $m(X)<\infty$, stochastic completeness and form uniqueness
are equivalent, see Theorem~16 in \cite{Sch17}. Given this,
the equivalence of the two statements above follows directly
from Corollary~\ref{cor:form_uniqueness} and Theorem~\ref{thm:SC}.
\end{proof}

\begin{rem}
For some background on stochastic completeness in the case of Riemannian manifolds
see \cite{Gri99}.
Recent years have seen great interest in studying the stochastic completeness
of graphs see, e.g., \cite{DM06, Fol14b, GHM12, HL17, Hua11, Hua11a, Hua12, Hua14, HKS20, KL10, KL12, KLW13, Sch17, Web10, Woj08, Woj09, Woj11, MW19, Woj21}. Here we characterize
stochastic completeness via convergence in $\ell^1$. For further connections between stochastic completeness
and Liouville properties in $\ell^1$ see \cite{AS23}. 
\end{rem}

\section{The Feller property of the Neumann semigroup}
In this section, we study the Feller property for the Neumann semigroup. This property
has to do with heat vanishing at infinity. We will characterize it via 
a connection to form uniqueness.

More specifically, we let 
$$C_0(X)=\ov{C_c(X)}^{\| \cdot \|_\infty}$$
denote the functions vanishing at infinity.
Given a semigroup $P_{t}=e^{-tL}$ on $\ell^2(X,m)$ for an operator $L$ associated to a graph 
whose form is a Dirichlet form,
we say that the semigroup satisfies the \emph{Feller property} or is \emph{$C_0$-conservative}
if
$$P_{t}(C_c(X)) \subseteq C_0(X)$$
for some (all) $t> 0$.
We note that it suffices to consider only positive functions with finite support in the above inclusion
or even just the indicator function of a vertex provided that the graph is connected, see Lemma~3.1 in \cite{HMW19}.
Furthermore, it is equivalent to consider the vanishing of resolvents in the above
definition, i.e., that 
$$R_\alpha (C_c(X)) \subseteq C_0(X)$$
where $R_\alpha = (L +\alpha)^{-1}$ for $\alpha>0$. In particular,
if $R_\alpha 1_x \in C_0(X)$ for some vertex $x \in X$, then $P_t$ is Feller.

We now consider two semigroups involving operators associated to a graph. 
We use
the minimum principle to show
that if the bigger semigroup is Feller, then the smaller semigroup is Feller and the two semigroups, in fact, coincide.

\begin{theorem}
Let $(b,c)$ be a connected graph over $(X,m)$.
Let $L_1$ and $L_2$ be operators associated to the graph whose forms are Dirichlet forms 
with semigroups $P_{t}^{(1)}$ and $P_{t}^{(2)}$.
Suppose that $$P_{t}^{(1)}f \leq P_{t}^{(2)}f$$ for all $f \geq 0$ with $f \in \ell^2(X,m)$ and all $t\geq0$.
The following statements are equivalent:
	\begin{itemize}
		\item[\textup{(i)}] $P_{t}^{(2)}$ is Feller.
		\item[\textup{(ii)}] $P_{t}^{(1)}$ is Feller and $P_{t}^{(1)}=P_{t}^{(2)}$.
	\end{itemize}
\end{theorem}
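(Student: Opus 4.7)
The implication (ii) $\Rightarrow$ (i) is immediate, since the two semigroups coincide. The substance lies in (i) $\Rightarrow$ (ii), and my plan is to pass to the resolvents, reduce the difference between the two resolvents to a nonnegative function that vanishes at infinity and is $\alpha$-harmonic for the formal Laplacian, and then apply a pointwise maximum principle.

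First, I would translate the Feller property from semigroups to resolvents using the characterization recalled just before the theorem: $P_t^{(i)}$ is Feller if and only if $R_\alpha^{(i)} 1_x \in C_0(X)$ for some (equivalently, all) $\alpha > 0$ and some vertex $x \in X$. The semigroup domination $P_t^{(1)} f \leq P_t^{(2)} f$ for $f \geq 0$ transfers to the resolvents via the Laplace formula $R_\alpha^{(i)} f = \int_0^\infty e^{-\alpha t} P_t^{(i)} f \, dt$, giving $0 \leq R_\alpha^{(1)} 1_x \leq R_\alpha^{(2)} 1_x$ for every $\alpha > 0$ and $x \in X$.

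Next, because $L_1$ and $L_2$ are both associated to the graph, they are restrictions of the common formal Laplacian $\Delta$. Setting $u := R_\alpha^{(2)} 1_x - R_\alpha^{(1)} 1_x$, the identities $(L_i+\alpha) R_\alpha^{(i)} 1_x = 1_x$ combine to yield $(\Delta + \alpha) u = 0$ pointwise on $X$, that is, $\Delta u = -\alpha u$. Assuming (i), we also have $0 \leq u \leq R_\alpha^{(2)} 1_x \in C_0(X)$, so $u \in C_0(X)$, and $u$ attains its supremum $M \geq 0$ at some vertex $x_0$. If $M > 0$, then at $x_0$ each summand in
\[
\Delta u(x_0) = \frac{1}{m(x_0)} \sum_{y \in X} b(x_0,y)\bigl(u(x_0) - u(y)\bigr) + \frac{c(x_0)}{m(x_0)} u(x_0)
\]
is nonnegative, so $\Delta u(x_0) \geq 0$, contradicting $\Delta u(x_0) = -\alpha M < 0$. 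Hence $u \equiv 0$, and therefore $R_\alpha^{(1)} 1_x = R_\alpha^{(2)} 1_x$ for every $x \in X$. By linearity and density of $\operatorname{span}\{1_x : x \in X\}$ in $\ell^2(X,m)$, the two resolvents coincide, so $P_t^{(1)} = P_t^{(2)}$ for all $t \geq 0$, and the Feller property for $P_t^{(1)}$ is inherited from (i).

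The principal step — and the one I expect to be the only real obstacle — is the $C_0$-maximum principle argument: it is what converts the qualitative bound ``both resolvents are in $C_0(X)$'' together with the algebraic equation $\Delta u = -\alpha u$ into the quantitative conclusion $u \equiv 0$. Everything else (resolvent domination, the difference being $\alpha$-harmonic for $\Delta$ because both $L_i$ restrict $\Delta$, and reducing the Feller property to a single indicator) is formal and uses only the standing hypotheses of the theorem together with the characterizations recalled immediately before its statement.
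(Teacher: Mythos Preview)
Your proof is correct and takes a genuinely different route from the paper's. The paper argues parabolically: it sets $u_t = (P_t^{(2)} - P_t^{(1)})1_x$, which solves the heat equation $(\Delta + \partial_t)u_t = 0$, invokes positivity-improving to get $u_t > 0$, then applies a parabolic minimum principle on a finite set $K$ chosen via the Feller property and combines this with the semigroup property and the lower bound $P_s^{(2)}1_x \geq e^{-s\Deg(x)}1_x$ to derive a contradiction. You instead work elliptically with the stationary function $u = R_\alpha^{(2)}1_x - R_\alpha^{(1)}1_x$, observe that it is nonnegative, $\alpha$-harmonic for $\Delta$, and lies in $C_0(X)$, and then finish with the one-line $C_0$-maximum principle: a nonnegative $C_0$-function attains its supremum, and at a strict maximum the formal Laplacian is nonnegative. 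Your argument is shorter and more self-contained, needing neither the parabolic minimum principle from \cite{KLW21} nor the positivity-improving property, and it uses only facts already recorded immediately before the theorem (resolvent formulation of Feller, associated operators restrict $\Delta$). The paper's approach, in contrast, stays entirely at the semigroup level and showcases the heat-equation minimum principle used elsewhere in the reference \cite{KLW21}.
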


\begin{proof} 
	The implication (ii) $\Longrightarrow$ (i) is trivial. Thus, we only have to prove (i) $\Longrightarrow$ (ii).
	Obviously, $P_{t}^{(1)}$ is Feller since $P_{t}^{(2)}$ is Feller and $P_{t}^{(1)} f \leq P_{t}^{(2)} f$ 
	for all $f \geq 0$.
	
	It is left to show $P_{t}^{(1)}=P_{t}^{(2)}$. Suppose not.
	Let $x \in X$ and let 
    $$u_t = \left(P_{t}^{(2)} - P_{t}^{(1)} \right) 1_x$$
    for $t \geq 0$.
    As both operators are restrictions of the formal Laplacian, $u_t$ satisfies the heat equation, i.e., $(\LL+\partial_t) u_t = 0$.	
    Thus, due to connectedness and positivity improving properties of the semigroups,
    we have $u_t >0$ for 
    all $t >0$.
	
	Let $T>0$ and choose $\eps>0$ so that 
	$$\eps<  e^{-T \Deg(x)} u_T(x)$$ 
	where 
	$\Deg(x) = \left( \sum_{y \in X} b(x,y)+c(x)\right)/m(x)$ 
	is the weighted degree of $x$.
	Since $P_{t}^{(2)}$ is Feller, there exists a finite set $K \subseteq X$ containing $x$ such that
	$P_{T}^{(2)} 1_x \leq \eps$ on the vertex boundary 
	$$\partial K =\{y \in K \mid b(y,z)>0 \mbox{ for some } z \in X \setminus K\}.$$
	Due to the minimum principle for the heat equation, e.g., 
	Theorem~1.10 in Section~1.1 of \cite{KLW21}, since $u_0=0$, there exist 
	$y \in \partial K$ and $t \in [0,T]$ such that 
	$$u_t(y) \geq u_T(x) >0.$$
	Furthermore, using the minimum principle again, we obtain $P_{s}^{(2)}1_x \geq e^{-s \Deg(x)}1_x$
	for all $s \geq 0$.
	Thus, using the semigroup property and putting everything together, we obtain
	\begin{align*}
	P_{T}^{(2)} 1_x(y) &= P_{t}^{(2)} P_{T-t}^{(2)} 1_x(y) \\
	& \geq P_{t}^{(2)} \left(e^{-(T-t) \Deg(x)} 1_x \right)(y) \\
	&= e^{-(T-t) \Deg(x)}  P_{t}^{(2)} 1_x(y) \\
	&\geq e^{-(T-t) \Deg(x)}u_t(y) \\
	& \geq e^{-T \Deg(x)} u_T(x) >\eps.
	\end{align*}
	This is a contradiction to 	$P_{T}^{(2)} 1_x \leq \eps$ on $\partial K$.
	Therefore, the assumption $P_{t}^{(1)}\neq P_{t}^{(2)}$ is wrong.
	This proves the theorem.
\end{proof}

We obtain an immediate corollary which characterizes the Feller property for the Neumann semigroup
via the Feller property for the Dirichlet semigroup and form uniqueness.

\begin{corollary}[Feller property and form uniqueness]\label{cor:Feller_uniqueness}
	Let $(b,c)$ be a connected graph over $(X,m)$. The following statements are equivalent:
	\begin{itemize}
	\item[\textup{(i)}] The Neumann semigroup is Feller.
	\item[\textup{(ii)}] The Dirichlet semigroup is Feller and $Q^{(D)}=Q^{(N)}$.
    \end{itemize}	
    In particular, in this case, the restricted Neumann semigroups
    converge strongly to the Dirichlet semigroup in $\ell^2(X,m)$
for any exhaustion.
\end{corollary}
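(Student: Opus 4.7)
The plan is to derive this corollary directly from the preceding theorem by specializing to $L_1=L^{(D)}$ and $L_2=L^{(N)}$. The setup is clean: both $L^{(D)}$ and $L^{(N)}$ are operators associated to the graph whose forms $Q^{(D)}$ and $Q^{(N)}$ are Dirichlet forms, so the structural hypotheses of the theorem are immediate. What remains is to verify the comparison assumption and then translate the conclusion into the language of form uniqueness.

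First I would check that $P_{t}^{(D)} f \leq P_{t}^{(N)} f$ for every $f \in \ell^2(X,m)$ with $f \geq 0$ and every $t \geq 0$. This is a standard consequence of form monotonicity: since $D(Q^{(D)}) \subseteq D(Q^{(N)})$ and the two forms agree on the smaller domain, $Q^{(N)}$ is a form-extension of $Q^{(D)}$, and for Dirichlet forms this extension relation yields $P_{t}^{(D)} \leq P_{t}^{(N)}$ on nonnegative functions. With this in hand, the preceding theorem applied to $L_1=L^{(D)}$ and $L_2=L^{(N)}$ gives that the Neumann semigroup is Feller if and only if the Dirichlet semigroup is Feller and $P_{t}^{(D)} = P_{t}^{(N)}$ for all $t \geq 0$.

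Next I would note that the coincidence of semigroups $P_{t}^{(D)} = P_{t}^{(N)}$ is equivalent to the coincidence of forms $Q^{(D)} = Q^{(N)}$. Indeed, a closed, densely defined form on $\ell^2(X,m)$ is uniquely determined by its associated semigroup via the spectral theorem, so equality of the semigroups forces equality of the generators and hence of the forms. Combining this with the previous step yields the stated equivalence of (i) and (ii).

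The additional assertion is immediate from Corollary~\ref{cor:form_uniqueness}: once $Q^{(D)} = Q^{(N)}$ is established under (i), the equivalence (i)$\Leftrightarrow$(ii) of that corollary delivers $P_{t,k}^{(N)} \to P_{t}^{(D)}$ strongly in $\ell^2(X,m)$ for any exhaustion. I do not anticipate a genuine obstacle in this argument; the only delicate point is the domination $P_{t}^{(D)} \leq P_{t}^{(N)}$, which is however a well-known general fact about Dirichlet-form extensions and is also consistent with the inequalities $P_{t,k}^{(D)} \varphi \leq P_{t,k}^{(N)} \varphi$ and $P_{t,k}^{(D)} \varphi \leq P_{t}^{(D)} \varphi$ already recorded in the excerpt, combined with the strong $\ell^2$-convergence results of Section~3.
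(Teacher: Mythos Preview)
Your proposal is correct and matches the paper's approach: the paper states this as an immediate corollary of the preceding theorem (with no separate proof), and your argument is precisely the intended specialization $L_1=L^{(D)}$, $L_2=L^{(N)}$ together with Corollary~\ref{cor:form_uniqueness} for the final clause.

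One small remark on the justification of the domination $P_t^{(D)} f \leq P_t^{(N)} f$: the phrase ``standard consequence of form monotonicity'' is a bit loose, since form extension of Dirichlet forms does not by itself force pointwise semigroup domination in general (one needs an ideal condition as in Ouhabaz's criterion). However, your alternative route---passing to the limit in the finite-set inequalities $P_{t,k}^{(D)}\varphi \leq P_{t,k}^{(N)}\varphi$ using the strong $\ell^2$-convergences of Section~3---is entirely solid and is the cleanest way to obtain the domination from what is already in the paper; equivalently one can invoke the minimality of the Dirichlet semigroup recorded in the introduction.
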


\begin{rem}
For some background on the Feller property for Riemannian manifolds see \cite{Aze74, Yau78, Dod83, KL, PS12}.
For the graph case, the Feller property has been studied for the Dirichlet semigroup in 
\cite{Adr21, HMW19, Woj17}.
\end{rem}

\section{The Feller property via $\alpha$-superharmonic functions}
In this section, we investigate the relation between the Feller property of the Neumann semigroup
and the existence of $\alpha$-(super)harmonic functions for $\alpha>0$. 

More specifically, we say that a function $u \in \F$ is 
\emph{$\alpha$-superharmonic}
if $(\LL + \alpha)u \geq 0$. If $(\LL + \alpha)u = 0$, we say that $u$ is \emph{$\alpha$-harmonic}. We will shortly establish
that the triviality of positive 
$\alpha$-superharmonic functions in $\ell^1$ for $\alpha>0$ 
implies that the Neumann semigroup is Feller.

We note that there
is a characterization of the Feller property for the Dirichlet semigroup
via the vanishing at infinity of the minimal positive $\alpha$-superharmonic function, see \cite{HMW19, Woj17}.
Furthermore, form uniqueness is equivalent to the fact that all $\alpha$-harmonic functions in $D(Q^{(N)})$
vanish, see \cite{HKLW12, KLW21}.

Schematically, for locally finite graphs, we will establish the following results:

\begin{tabular}{lll}
Stochastic completeness $\&$  \quad $\Longrightarrow$ & No positive $\alpha$-harmonic \quad $\Longrightarrow$ & $P_{t}^{(N)}$ Feller. \\
\qquad \qquad $P_{t}^{(D)}$ Feller & \qquad  functions in $\ell^1$ 
\end{tabular}

Moreover, none of the implications allow for a reverse as follows from 
Examples~\ref{ex:NoSolutionStochIncomplete}~and~\ref{ex:solutionButFeller} below.
However, for the case of locally finite graphs with finite measure, all of the properties above
are equivalent by Corollary~\ref{cor:finite_measure} below.

We note that stochastic completeness always implies form uniqueness and thus the implication from 
the leftmost property to the rightmost property in the above is immediate from Corollary~\ref{cor:Feller_uniqueness}.
However, here we pass through the middle property involving $\alpha$-harmonic functions in $\ell^1$.

We start by showing the second implication in the above.

\begin{theorem}\label{thm:harmonic_Feller}
Let $(b,c)$ be a connected graph over $(X,m)$. 
If every positive $\alpha$-superhamonic function in $\ell^1(X,m)$ for some $\alpha>0$ is trivial,
then the Neumann semigroup is Feller. If the graph is locally finite, then it
suffices to consider positive $\alpha$-harmonic functions.
\end{theorem}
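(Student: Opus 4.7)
The plan is to argue by contraposition: assuming the Neumann semigroup fails to be Feller for the distinguished $\alpha>0$ from the hypothesis, I will manufacture a nontrivial nonnegative $\alpha$-superharmonic function in $\ell^1(X,m)$. Since, by the discussion preceding the theorem, the Feller property is equivalent to $R^{(N)}_\alpha 1_x \in C_0(X)$ for some (equivalently every) $x \in X$ under connectedness, the failure supplies vertices $x \in X$, a constant $\eps > 0$, and a sequence $y_n \to \infty$ in $X$ such that $R^{(N)}_\alpha 1_x(y_n) \geq \eps$ for all $n$.

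Set $g_n := R^{(N)}_\alpha 1_{y_n}/m(y_n)$. Self-adjointness of $R^{(N)}_\alpha$ on $\ell^2(X,m)$ gives the Green kernel symmetry
\[
R^{(N)}_\alpha 1_{y_n}(x)\,m(x) = R^{(N)}_\alpha 1_x(y_n)\,m(y_n),
\]
so $g_n(x) \geq \eps/m(x) > 0$. Because $Q^{(N)}$ is a Dirichlet form, $P^{(N)}_t$ extends to a Markovian semigroup on each $\ell^p(X,m)$; in particular, $\|R^{(N)}_\alpha\|_{p\to p}\leq 1/\alpha$ for $p\in\{1,\infty\}$. Applied to $1_{y_n}$ and combined with the same kernel symmetry, this yields the uniform bounds $0\leq g_n(z)\leq 1/(\alpha\,m(z))$ at every $z\in X$ and $\|g_n\|_1\leq 1/\alpha$.

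A diagonal extraction then produces a subsequence (still denoted $g_n$) converging pointwise to some $g\geq 0$. Fatou's lemma gives $\|g\|_1\leq 1/\alpha$, while the inequality at $x$ passes to the limit to give $g(x)\geq\eps/m(x)>0$, so $g$ is a nontrivial element of $\ell^1(X,m)$. To show $g$ is $\alpha$-superharmonic, observe that $(\LL+\alpha)g_n=1_{y_n}/m(y_n)$, which vanishes at any fixed $z$ as soon as $y_n\neq z$. Expanding, for all sufficiently large $n$,
\[
(\alpha+\Deg(z))\,g_n(z)=\frac{1}{m(z)}\sum_{y\in X} b(z,y)\,g_n(y).
\]
The left-hand side converges to $(\alpha+\Deg(z))\,g(z)$ and Fatou's lemma on the right-hand side yields
\[
\frac{1}{m(z)}\sum_{y\in X} b(z,y)\,g(y)\leq(\alpha+\Deg(z))\,g(z)<\infty,
\]
so $g\in\F$ and $(\LL+\alpha)g(z)\geq 0$. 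This contradicts the hypothesis.

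For the locally finite strengthening, the sum $\sum_{y\in X} b(z,y)\,g_n(y)$ reduces to finitely many nonzero terms at each fixed $z$, so pointwise convergence passes term by term. The identity then persists in the limit, giving $(\LL+\alpha)g=0$, so $g$ is a nontrivial nonnegative $\alpha$-harmonic function in $\ell^1(X,m)$, contradicting the weaker hypothesis. The main obstacle is precisely this last limiting step: in the general, non-locally-finite case one cannot produce a dominating function for the off-diagonal sum, which forces the one-sided Fatou argument and thus only the $\alpha$-superharmonic conclusion rather than $\alpha$-harmonic.
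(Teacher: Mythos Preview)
Your proof is correct and takes a genuinely different route from the paper's. The paper first invokes Corollary~\ref{cor:Feller_uniqueness} to split into two cases: either $Q^{(D)}\neq Q^{(N)}$, in which case a nontrivial positive $\alpha$-harmonic function in every $\ell^p$ is pulled from the form-uniqueness theory, or $P_t^{(D)}$ is not Feller, in which case the paper works with the \emph{Dirichlet} resolvent $R_\alpha^{(D)}\delta_{x_n}$, normalizes at a fixed vertex, and extracts a convergent subsequence via a Harnack principle (Corollary~4.5 in \cite{KLW21}) rather than via a uniform pointwise bound. You instead work directly with the \emph{Neumann} resolvent, bypass the case split entirely, and replace the Harnack argument by the elementary $\ell^\infty$-bound $g_n(z)\leq 1/(\alpha\,m(z))$ (from kernel symmetry and $\|R_\alpha^{(N)}\|_{\infty\to\infty}\leq 1/\alpha$) together with a diagonal extraction. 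Your approach is more self-contained---it needs neither Corollary~\ref{cor:Feller_uniqueness} nor the Harnack inequality nor the $\ell^p$-characterization of form non-uniqueness---while the paper's approach has the incidental byproduct that in the form-non-uniqueness case one obtains an $\alpha$-\emph{harmonic} (not merely superharmonic) function even without local finiteness.
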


\begin{proof}
Suppose the Neumann semigroup is not Feller. By Corollary~\ref{cor:Feller_uniqueness}
this means either $Q^{(D)}\neq Q^{(N)}$ or $P_{t}^{(D)}$ is not Feller.
If $Q^{(D)}\neq Q^{(N)}$, there exists a non-trivial $u \geq 0$ with $(\LL+\alpha)u= 0$
and $u \in \ell^p(X,m)$ for every $p \in [1,\infty]$ see, e.g., Theorem~3.2 in Section~3.1 of \cite{KLW21}.

Now suppose that $P_{t}^{(D)}$ is not Feller. Let $R_\alpha=(L^{(D)}+\alpha)^{-1}$ denote
the resolvent associated to $L^{(D)}$ for $\alpha>0$ extended to $\ell^1(X,m)$. 
For $x \in X$, let $\delta_x = 1_x/m(x)$ denote
the indicator function of $x$ normalized so that $\|\delta_x\|_1 =1$. 
Let $\eps>0$. As $P_{t}^{(D)}$ is not Feller, there exists a sequence of 
vertices $(x_n)$ which leaves every
finite set so that $R_\alpha \delta_x (x_n) > \eps >0$ for all $n \in \N$.
By symmetry of the resolvent, we note
$$R_\alpha \delta_x(x_n) = \langle R_\alpha \delta_x, \delta_{x_n}\rangle
=\langle \delta_x, R_\alpha \delta_{x_n}\rangle = R_\alpha \delta_{x_n}(x).$$

Consider
$$u_n= \frac{R_\alpha \delta_{x_n}}{R_\alpha \delta_{x_n}(x)}.$$
Then, $u_n(x)=1$ and $\|u_n\|_1 \leq 1/(\alpha \eps)$ as $\| R_\alpha \|_1 \leq 1/\alpha$ for $\alpha>0$. By using a Harnack principle, it follows
that there exists a subsequence converging pointwise to a non-trivial $u \geq 0$ which is
$\alpha$-superharmonic in general and $\alpha$-harmonic in case the graph is locally finite,
see Corollary~4.5 in Section~4.1 of \cite{KLW21}. By Fatou's Lemma, $u \in \ell^1(X,m)$. This completes the proof.
\end{proof}

We now prove the first implication in the scheme above by showing that stochastic completeness
together with the Feller property of the Dirichlet semigroup imply that there are no positive
$\alpha$-harmonic functions in $\ell^1$. For this, we let $c=0$ and recall that stochastic
completeness is equivalent to the Dirichlet semigroup and resolvent preserving the $\ell^1$-norm
of positive functions.

\begin{theorem}\label{thm:sc+Feller}
Let $b$ be a connected graph over $(X,m)$. 
If $b$ over $(X,m)$ is stochastically complete and the Dirichlet semigroup is Feller,
then every positive $\alpha$-harmonic function in $\ell^1(X,m)$ 
for $\alpha>0$ is trivial.
\end{theorem}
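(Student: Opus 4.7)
The plan is to prove $\langle u, g \rangle = 0$ for every $g \in C_c(X)$ with $g \geq 0$, which will force $u \equiv 0$. Fix such a $g$ and set $h_n = R_{\alpha,n}^{(D)} g$ and $h = R_\alpha^{(D)} g$. By domain monotonicity $h_n \uparrow h$ pointwise; by stochastic completeness $\|h\|_1 = \|g\|_1/\alpha$ and $\|h_n\|_1 \uparrow \|h\|_1$; and by the Feller hypothesis $h \in C_0(X)$.

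The first step is a finite Green's formula on $X_n$. For $n$ so large that $\mathrm{supp}(g) \subseteq X_n$, I would pair $u$ against the equation $(L_n^{(D)}+\alpha)h_n = g$ on $X_n$ and use $(\Delta+\alpha)u = 0$ to cancel the bulk contribution. A direct expansion, using that $h_n$ is supported in $X_n$ and that the interior symmetric double sum vanishes after antisymmetrization, isolates a single boundary term:
\[
\langle u, g\rangle \;=\; \sum_{x \in X_n,\, y \notin X_n} b(x,y)\, h_n(x)\, u(y).
\]

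The second step is to send $n \to \infty$ and show the right-hand side vanishes. Summing $(L_n^{(D)}+\alpha)h_n = g$ against $m$ on $X_n$ gives the unweighted flux identity
\[
\sum_{x \in X_n,\, y \notin X_n} b(x,y)\, h_n(x) \;=\; \|g\|_1 - \alpha \|h_n\|_1,
\]
which tends to $0$ by stochastic completeness. To upgrade this to the $u$-weighted flux, I would use Feller: for $\epsilon > 0$ the set $K_\epsilon = \{x : h(x) \geq \epsilon\}$ is finite. Splitting the sum at $K_\epsilon$, the contribution with $x \in K_\epsilon$ tends to $0$ because for each such $x$ the tail $\sum_{y \notin X_n} b(x,y) u(y)$ vanishes (the full sum equals $m(x)(\Deg(x)+\alpha) u(x) < \infty$ by $\alpha$-harmonicity of $u$). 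The contribution with $x \notin K_\epsilon$ is bounded by $\epsilon$ times a sum that one estimates via the $\alpha$-harmonic relation $\sum_y b(x,y) u(y) = m(x)(\Deg(x)+\alpha) u(x)$ combined with the vanishing of the unweighted flux. Letting $n \to \infty$ and then $\epsilon \downarrow 0$ yields $\langle u, g \rangle = 0$.

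The main obstacle is precisely the last estimate: transferring the vanishing of the unweighted boundary flux (which is essentially stochastic completeness) to the $u$-weighted flux, without the luxury of $u \in \ell^\infty$. Both hypotheses enter nontrivially here — Feller supplies the pointwise smallness of $h$ that localizes the analysis off a finite set, while stochastic completeness controls the total flux — and the $\alpha$-harmonic equation for $u$ is what binds the two, converting weighted neighbor-sums of $u$ into pointwise multiples of $u$ that are under control.
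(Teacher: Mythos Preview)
Your Green's-formula identity $\langle u,g\rangle=\sum_{x\in X_n,\,y\notin X_n}b(x,y)h_n(x)u(y)$ and the unweighted flux identity $\sum_{x\in X_n,\,y\notin X_n}b(x,y)h_n(x)=\|g\|_1-\alpha\|h_n\|_1\to 0$ are both correct, and the contribution from $x\in K_\epsilon$ does vanish as $n\to\infty$. The gap is exactly where you flag it: the piece with $x\in X_n\setminus K_\epsilon$. After using $h_n(x)\le h(x)<\epsilon$ there, you need a bound on $\sum_{x\in X_n,\,y\notin X_n}b(x,y)\,u(y)$ that is uniform in $n$, and neither ingredient you invoke supplies one. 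The unweighted flux is a statement about $h_n$, not about $u$, so it does not touch this sum; and the $\alpha$-harmonic relation, far from helping, shows the obstruction is intrinsic: the natural majorant is $\sum_y u(y)m(y)\Deg(y)$, and Tonelli together with the $\alpha$-harmonic relation give
\[
\sum_y u(y)m(y)\Deg(y)=\sum_x\sum_y b(x,y)u(y)=\sum_x m(x)\bigl(\Deg(x)+\alpha\bigr)u(x),
\]
so this quantity equals itself plus $\alpha\|u\|_1$ and is therefore infinite whenever $u\not\equiv0$. The same circularity appears if one tries dominated convergence with the majorant $u(y)\sum_x b(x,y)h(x)$: the relevant sum $\sum_x h(x)u(x)m(x)\Deg(x)$ again formally equals itself plus $\langle u,g\rangle$, which proves nothing unless it is finite a priori. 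In short, the boundary-flux strategy cannot close without an extra hypothesis such as bounded $\Deg$.

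The paper sidesteps this by never passing through a boundary flux at infinity. It fixes a single vertex $x$, uses the Feller property to choose a \emph{finite} set $K\ni x$ with $R_\alpha\delta_x<\epsilon$ on $\partial K$, and uses minimality of the Dirichlet resolvent to write the least nonnegative $\alpha$-superharmonic function agreeing with $u$ on $K$ as $R_\alpha v$ for some $v=\sum_{y\in\partial K}a_y\delta_y\ge0$ supported on the finite set $\partial K$. Stochastic completeness gives $\sum_y a_y\le C\|u\|_1$, and resolvent symmetry then yields $u(x)=\sum_y a_y R_\alpha\delta_y(x)=\sum_y a_y R_\alpha\delta_x(y)\le\epsilon\sum_y a_y$, contradicting $u(x)>0$ for $\epsilon$ small. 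Because only a finite boundary ever appears, no uniform control of fluxes through $\partial X_n$ is required.
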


\begin{proof}
Suppose there exists a non-trivial $u \geq 0$ which satisfies $(\LL + \alpha)u = 0$ for $\alpha>0$
and $u \in \ell^1(X,m)$. By connectedness and a Harnack principle, it follows that $u>0$, 
see Corollary~4.2 in Section~4.1 of \cite{KLW21}.

Let $x \in X$ and $\eps>0$. Let $R_\alpha=(L^{(D)}+\alpha)^{-1}$ denote the resolvent of $L^{(D)}$ for $\alpha>0$
on $\ell^1(X,m)$. By the Feller property of the Dirichlet semigroup, there exists $K \subseteq X$ finite with $x \in K$
such that $R_\alpha \delta_x < \eps$ on 
$$\partial K=\{ y \in K \mid \textup{there exists } z \not \in K \textup{ such that } b(y,z)>0 \}$$
where $\delta_x = 1_x/m(x)$.

Now, let
\[g= \inf\{h \in C(X) \mid (\LL+\alpha) h \geq 0, h\geq 0, h_{|_K}=u_{|_K}\}.
\]
As the resolvent generates the minimal positive solution and $g$ is 
$\alpha$-harmonic on $K \setminus \partial K$, we obtain
$g= R_\alpha v$ for some $v\geq 0$ with $\supp v \subseteq \partial K$,
see Theorem~2.12 in Section 2.2 of \cite{KLW21}.

We write
\[
v= \sum_{y \in \partial K} a_y \delta_y
\]
for suitable $a_y \geq 0$.
By minimality and stochastic completeness,
\[
\|u\|_1 \geq \|R_\alpha v\|_1 = \|v\|_1= \sum_{y \in \partial K} a_y.
\]
On the other hand, by symmetry of the resolvent,
\[
u(x)= R_\alpha v(x)=\sum_{y \in \partial K} a_y R_\alpha \delta_y(x) = 
\sum_{y \in \partial K} a_y R_\alpha \delta_x(y)\leq \eps \sum_{y \in \partial K} a_y.
\]
Thus, $\|u\|_1 \geq u(x)/\eps$ which is a contradiction since $u(x)>0$ and $\eps$ can be chosen arbitrarily small.
This finishes the proof.
\end{proof}

We now give a corollary for locally finite graphs of finite measure.

\begin{corollary}[Finite measure and the Feller property]\label{cor:finite_measure}
Let $b$ be a locally finite connected graph over $(X,m)$ with $m(X)<\infty$.
The following statements are equivalent:
\begin{itemize}
\item[\textup{(i)}]  The Neumann semigroup is Feller.
\item[\textup{(ii)}] The Dirichlet semigroup is Feller and the 
graph is stochastically complete.
\item[\textup{(iii)}]  Every positive $\alpha$-harmonic function in $\ell^1(X,m)$
for $\alpha>0$ is trivial.
\end{itemize}
In particular, if any of these additional conditions hold, then the restricted
Neumann semigroups converge strongly to the Dirichlet semigroup
in both $\ell^1(X,m)$ and $\ell^2(X,m)$ for any exhaustion.
\end{corollary}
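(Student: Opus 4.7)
The plan is to close the three-way equivalence by chaining together results already established in the paper, exploiting the fact that in the finite-measure setting form uniqueness and stochastic completeness coincide.

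First I would prove (i) $\Longrightarrow$ (ii). Assume the Neumann semigroup is Feller. By Corollary~\ref{cor:Feller_uniqueness}, the Dirichlet semigroup is Feller and $Q^{(D)} = Q^{(N)}$. Since $m(X) < \infty$, Theorem~16 of \cite{Sch17} (which was already invoked in the proof of the finite-measure corollary in Section~4) tells us that form uniqueness is equivalent to stochastic completeness, so we obtain the stochastic completeness of the graph.

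Next, the implication (ii) $\Longrightarrow$ (iii) is exactly the content of Theorem~\ref{thm:sc+Feller}, and the implication (iii) $\Longrightarrow$ (i) is the locally finite case of Theorem~\ref{thm:harmonic_Feller}. Note that local finiteness is needed here only to be able to pass from the $\alpha$-superharmonic hypothesis of Theorem~\ref{thm:harmonic_Feller} to the weaker $\alpha$-harmonic hypothesis stated in (iii); since we have assumed local finiteness, this is exactly the regime in which the harmonic version applies.

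For the final ``In particular'' statement, assume any (hence all) of (i)-(iii) hold. The equivalence with (ii) gives both stochastic completeness and, through Corollary~\ref{cor:Feller_uniqueness} combined with the Schmidt result above, form uniqueness $Q^{(D)} = Q^{(N)}$. Form uniqueness plugs into Corollary~\ref{cor:form_uniqueness} to yield strong convergence $P_{t,k}^{(N)} \to P_{t}^{(D)}$ in $\ell^2(X,m)$, while stochastic completeness plugs into Theorem~\ref{thm:SC} to yield the corresponding convergence in $\ell^1(X,m)$, for any exhaustion.

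The argument is essentially a bookkeeping exercise, so the only real concern is making sure every cited result applies in the stated generality: local finiteness is used only in Theorem~\ref{thm:harmonic_Feller} to sharpen superharmonic to harmonic, finiteness of the measure is used only to invoke the Schmidt equivalence, and the $c = 0$ assumption needed for Theorem~\ref{thm:sc+Feller} and Theorem~\ref{thm:SC} is implicit in speaking of stochastic completeness (and hence should be flagged in the statement, as in earlier sections). No new estimates are required.
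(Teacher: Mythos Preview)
Your proposal is correct and matches the paper's own proof essentially step for step: the paper also reduces everything to the Schmidt equivalence of form uniqueness and stochastic completeness under $m(X)<\infty$, then invokes Corollary~\ref{cor:Feller_uniqueness} for (i)\,$\Longleftrightarrow$\,(ii), Theorems~\ref{thm:harmonic_Feller} and~\ref{thm:sc+Feller} to close the loop through (iii), and Corollary~\ref{cor:form_uniqueness} together with Theorem~\ref{thm:SC} for the ``in particular''. The only cosmetic difference is that you run a cycle (i)\,$\Rightarrow$\,(ii)\,$\Rightarrow$\,(iii)\,$\Rightarrow$\,(i) whereas the paper first establishes (i)\,$\Leftrightarrow$\,(ii) and then ties in (iii).
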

\begin{proof}
When $m(X)<\infty$, stochastic completeness and form uniqueness
are equivalent by Theorem~16 in \cite{Sch17}. Given this,
the equivalence (i) $\Longleftrightarrow$ (ii) follows from
Corollary~\ref{cor:Feller_uniqueness}. The equivalence of these properties and (iii)
follows by combining Theorems~\ref{thm:harmonic_Feller}~and~\ref{thm:sc+Feller}. 
The ``in particular'' statement then follows from Corollary~\ref{cor:form_uniqueness} and Theorem~\ref{thm:SC}.
\end{proof}

In the following examples, we show that no reverse implications of the above theorems hold.

\begin{example}[Triviality of $\alpha$-harmonic functions does not imply stochastic
completeness]\label{ex:NoSolutionStochIncomplete}
We will shortly introduce birth-death chains and characterize the existence of positive $\alpha$-harmonic functions in $\ell^1$ for such graphs.
In particular, we will see that if such a graph has infinite measure, i.e., $m(X)=\infty$,
then there is no positive $\alpha$-harmonic function in $\ell^1(X,m)$ as all such functions must increase along the chain,
see Theorem~\ref{thm:BDchar} below for further details. 
However, by the characterization
of stochastic completeness for weakly spherically symmetric graphs, e.g., Theorem~9.25 in Section~9.4 of \cite{KLW21}, there exist stochastically incomplete birth-death chains with $m(X)=\infty$.
Thus, the triviality of $\alpha$-harmonic functions in $\ell^1$ does
not imply stochastic completeness and the converse of Theorem~\ref{thm:sc+Feller} does not hold.
\end{example}

\begin{example}[Feller does not imply the triviality of $\alpha$-harmonic functions]\label{ex:solutionButFeller}
We construct a graph $b$ over $(X,m)$ 
allowing for a positive $\alpha$-harmonic function in $\ell^1$ such that the Neumann semigroup is Feller.

The graph is defined by letting $X=\N_0^2$ 
with $m(0,n)=2^{-n}$ and $m(k,n)=1$ for all $k>0$ and $n \geq 0$.
For the edge structure, we set
$$b((k,n),(k+1,n)) =  2^{nk} \qquad \textup{ and } \qquad b((0,n),(0,n+1))=4^{n+2}$$ 
for $n,k \in \N_0$.
Thus, the graph can be visualized as an infinitely high comb where each tooth of the comb
is infinite.

We will first construct a positive $\alpha$-harmonic function which is in $\ell^p(X,m)$ for $p \in [1,\infty]$. 
For the sake of concreteness, we let $\alpha=1$ and consider $u \geq 0$
such that $(\LL + 1)u=0$.
By a Harnack principle, we note that any such $u$ which is non-trivial will be strictly positive.
Thus, we may renormalize to let $u(0,0)=1$. 
Next, for every $n \in \N_0$, we note that
$u(k,n) \to 0$ as $k \to \infty$
since we want $u \in \ell^1(X,m)$ and since $m(k,n)=1$ for $k > 0$.
Now, for $n=0$, from the equations  $(\LL + 1)u(k,0)=0$ for $k=1, 2, 3, \ldots$ and the vanishing
of $u(k,0)$ as $k \to \infty$,
 it can be shown that there
exists a unique value $\beta = (3-\sqrt{5})/2$ such that $u(k,0)=\beta^k$.
The choices $u(0,0)=1$ and $u(1,0)=\beta$ uniquely determine
the value $u(0,1)$ which must be strictly greater than $u(0,0)$. 
In particular, we note that if such a function exists, it is uniquely
determined by the choice of the value $u(0,0)$. 

We will next argue that such a 
function does indeed exist in $\ell^p(X,m)$ for $ p \in [1,\infty]$ but must have infinite energy.
In particular, this will show that $Q^{(D)}=Q^{(N)}$ as form uniqueness
is equivalent to the triviality of $1$-harmonic functions in $D(Q^{(N)}) = \D \cap \ell^2(X,m)$.
To show that $u \in \ell^p(X,m)$, we argue that $u$ will exponentially decay
on the teeth of the comb and, though increasing, must be bounded along the base.

To establish the exponential decay along the teeth, consider the function
$$f(k,n) = u(0,n)2^{-kn}.$$
For $k>0$, a direct calculation gives $\LL f(k,n)=0$. As $u(k,n)$ vanishes as $k \to \infty$,
it is given by the resolvent and thus is the minimal positive solution to $(\LL + 1)w(k,n) \geq 0$ for $k >0$. 
Therefore, $u(k,n) \leq u(0,n)2^{-kn}$ for all $k >0$ and for each $n \in \N_0$.

To show that $u(0,n)$ is bounded for $n \in \N_0$ consider the function
$$g(k,n)=1_{\{k=0\}}\cdot(2-2^{-n}).$$
A direct calculation gives
$$\LL g(0,n)=2^{n}(-4^{n+2}\cdot 2^{-n-1} +4^{n+1}\cdot 2^{-n} + (2-2^{-n})) \leq -(2 - 2^{-n}) = -g(0,n)$$
and thus $(\LL+1)g(0,n) \leq 0$.
Using induction and $(\LL +1)u(0,n)=0$, we get
$$g(0,n+1) -g(0,n) \geq u(0,n+1)-u(0,n)$$
and $g(0,n+1) \geq u(0,n+1)$. In particular, $u(0,n) \leq C$ for all $n \in \N_0$.
By the choice of measure, this shows that there exists $u \in \ell^p(X,m)$ for $p \in [1,\infty]$
uniquely determined by the choice of $u(0,0)$
such that $(\LL+1)u=0$ .

To show that $u$ has infinite energy, we note that the sum of the energies of $u$ along the first tooth 
of each comb is infinite. To see this, observe that $u(0,n)$ is increasing in $n$ and $u(1,n) \leq u(0,n)2^{-n}$
from the exponential decay along the teeth.
Therefore,
$$ \sum_{n=0}^\infty b((0,n), (1,n))[u(0,n) - u(1,n)]^2 \geq
\sum_{n=0}^\infty u^2(0,n)(1 - 2^{-n})^2  = \infty.$$
This establishes form uniqueness since $u \in \ell^2(X,m)$ is the unique, up to scaling, function
such that $(\LL+1)u=0$.

Finally, to show that the Dirichlet semigroup is Feller consider the function
$$h(k,n)=2^{-nk} \cdot 4^{-n}.$$ 
Observe that $\LL h(k,n)=0$ for $k>0$ 
and $\LL h(0,n) \geq 0$ for $n \in \N_0$. As the Dirichlet resolvent generates
the minimal solution, we get for $L=L^{(D)}$ that
 $$(L+1)^{-1} 1_{(0,0)} \leq C h \in C_0(X)$$ 
 for a suitable $C>0$.
This shows that $P_{t}^{(D)}$ is Feller and, thus, that $P_t^{(N)}$
is Feller as we have already established that the semigroups coincide.
Therefore, $P_t^{(N)}$ is Feller but there exists a non-trivial $\alpha$-harmonic
function in $\ell^1(X,m)$.
\end{example}

\eat{
\begin{example}\label{ex:solutionButFeller}
We construct a graph $G=(V,w,m)$ allowing for a positive $\ell_1$ solution such that the Neumann semigroup is Feller.

The graph is given by $V=\N_0^2$ and $m(0,n)=2^{-n}$ and $m(k,n)=1$ for all $k>0$ and $n \geq 0$.
We moreover set
$w((k,n),(k+1,n)) =  2^{nk}$ and $w((0,n),(0,n+1))=4^{n+2}$ for $n,k \in \N_0$.

We first show that the $P_t^D = P_t^N$ and that there exists a positive $\ell_1$ solution. To do so, we have to show that there is no function $f \in \ell_2(V,m) \cap \ell_\infty(V)$ with finite energy. Suppose $f$ is such a function. Then, $\lim_{k\to \infty} f(k,n)=0$ for all $n$.
This means that on $\N_0 \times \{n\}$, the function $f$ is the smallest function non-negative $g$ satisfying $\Delta g \leq g$ and $g(0,n)=f(0,n)$. In particular, $f(k,n) \leq 2^{-kn} f(0,n)$.
Moreover by recursive construction, there exists, up to scaling, a unique non-zero function $f \in \ell_2(V,m)$ solving $\Delta f = f$. This function is strictly positive or strictly negative.
Let $h(k,n):=1_{k=0}\cdot(2-2^{-n})$.
Then,
\[
\Delta h(0,n)=2^{n}\cdot(4^{n+2}\cdot 2^{-n-1} -4^{n+1}\cdot 2^{-n} - (2-2^{-n})) \geq 2 - 2^{-n} = h(0,n)
\]
%Moreover, $\Delta h(k,n) \geq 0 = h(k,n)$ for $k>0$.
By induction, one can show that if $\Delta f=f$ and if $f >0$, then,
$f(0,n)/f(0,0)\leq h(0,n)/h(0,0)$.
In particular there exists $C>0$ s.t. $f(0,n) \leq C$ for all $n \geq 0$. This implies
\[
f(n,k) \leq C2^{-nk}
\]
which in turn shows that $f \in \ell_1(V,m)$. However, $f$ has infinite energy which shows that $P_t^N=P_t^D$ since $f$ is the only $\ell_2$ solution up to scaling.
For showing the Feller property consider
$g(k,n):=2^{-nk}\cdot 4^{-n}$. Observe that $\Delta g(k,n)=0$ for $k>0$ and $\Delta g(0,n) \leq 0$ for $n \in \N_0$.
Thus, $R 1_{(0,0)} \leq C g \in C_0(V)$ for suitable $C>0$.
In summary, we have shown that the Neumann semigroup is Feller and that there exists a positive $\ell_1$ solution.
\end{example}
}

\section{An edge condition}

A frequently used assumption on graphs is a lower bound on the vertex measure.
This already implies several analytic properties such as the Feller property of the Dirichlet semigroup, see \cite{Woj17}, 
and the coincidence of the Dirichlet and Neumann semigroups, see \cite{HKLW12}.
Recently, a different lower bound has been introduced in order to prove gradient estimates assuming lower Ricci curvature bounds,
see \cite{KM}. We show here that, in the case of locally finite graphs,
this condition implies that the Neumann semigroup is Feller and,
consequently, both form uniqueness and the Feller property of the Dirichlet semigroup hold.

\begin{definition}
Let $(b,c)$ be a graph over $(X,m)$. We say that the graph satisfies
the \emph{edge condition} (EC) if there exists $C>0$ such that for all $x,y \in X$
	\[
	b(x,y) \leq C m(x)m(y).
	\]
\end{definition}

We will show below that (EC) plus local finiteness 
imply that the Neumann semigroup is Feller.
To prove this, we need an $\ell^1$ uniformity of any semigroup associated to the graph coming from a Dirichlet form
as stated in the following lemma. We note that this applies, in particular, to both 
the Neumann and Dirichlet semigroups.

\begin{lemma}\label{l:uniformEll1}
	Let $(b,c)$ be a connected graph over $(X,m)$. Let $T>0$ and $\varphi \in C_c(X)$.
	Let $L$ be an operator associated to the graph such that the associated
form is a Dirichlet form and let $P_t= e^{-tL}$ denote 
the associated semigroup.
	Then, 
	\[
	\sup_{t \in [0,T]} P_t \varphi \in \ell^1(X,m).
	\]
\end{lemma}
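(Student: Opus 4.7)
The plan is to dominate $\sup_{t \in [0,T]} P_t\varphi$ pointwise by an explicit $\ell^1(X,m)$ function. By splitting $\varphi$ into positive and negative parts I may assume $\varphi \geq 0$.

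First I would verify $\LL\varphi \in \ell^1(X,m)$: with $K := \supp\varphi$ finite, the graph axiom $\sum_y b(x,y) < \infty$ immediately gives
\[
\|\LL\varphi\|_1 \leq 2\|\varphi\|_\infty \sum_{x \in K}\sum_y b(x,y) + \|\varphi\|_\infty \sum_{x \in K} c(x) < \infty.
\]
Fixing $\alpha > 0$, I set $f := 2(\LL\varphi)_+ + \alpha\varphi$, a nonnegative function in $\ell^1(X,m)$, and define the supersolution $g := R_\alpha^{(N)} f$, where $R_\alpha^{(N)}$ is the Neumann resolvent extended to $\ell^1$ as a positive contraction of norm $\leq 1/\alpha$. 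Then $g \in \ell^1(X,m)$ lies in the $\ell^1$-generator domain of $L^{(N)}$ with $L^{(N)} g = f - \alpha g$. To see $g \geq \varphi$, I would set $h := g - \varphi$ and compute $(\LL + \alpha) h = |\LL\varphi| \geq 0$; since $\varphi$ vanishes off $K$ and $g \geq 0$, we have $h \geq 0$ on $X \setminus K$, so any negative infimum of $h$ must be attained at some $x_0 \in K$ and would contradict $(\LL + \alpha) h(x_0) \geq 0$ by the standard minimum-principle computation at a minimizer.

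The core step is the pointwise bound $P_t \varphi \leq e^{\alpha t} g$ for $t \geq 0$. Form comparison -- since $D(Q^{(D)}) \subseteq D(Q) \subseteq D(Q^{(N)})$ with all forms restrictions of $\Q$, the semigroups satisfy $P_t \leq P_t^{(N)}$ on positive functions -- reduces the problem to bounding $P_t^{(N)}\varphi$. Since $g \in D(L^{(N)},\ell^1)$ with $L^{(N)} g \geq -\alpha g$, the $\ell^1$-Duhamel identity $P_t^{(N)} g - g = -\int_0^t L^{(N)} P_s^{(N)} g\, ds$, the commutation $L^{(N)} P_s^{(N)} g = P_s^{(N)} L^{(N)} g$ on the generator domain, and positivity preservation together yield
\[
P_t^{(N)} g \leq g + \alpha \int_0^t P_s^{(N)} g\, ds,
\]
so Gronwall's inequality gives $P_t^{(N)} g \leq e^{\alpha t} g$. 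Positivity together with $\varphi \leq g$ then gives $P_t \varphi \leq P_t^{(N)}\varphi \leq P_t^{(N)} g \leq e^{\alpha t} g$, whence $\sup_{t \in [0,T]} P_t \varphi \leq e^{\alpha T} g \in \ell^1(X,m)$, as required.

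The main obstacle I expect is $\ell^1$-domain bookkeeping: verifying that the $\ell^1$-generator extension of the Neumann semigroup still acts as the formal Laplacian $\LL$ on the range of $R_\alpha^{(N)}$, so that the pointwise identity $L^{(N)} g = \LL g$ used in the minimum-principle argument is the same object as the generator used in the Duhamel estimate. Once this compatibility is settled, the remainder is a textbook Gronwall estimate together with the positivity-preserving structure of any Dirichlet-form semigroup.
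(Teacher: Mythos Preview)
Your argument has a genuine gap at the step where you invoke ``form comparison'' to conclude $P_t\varphi \leq P_t^{(N)}\varphi$ for $\varphi \geq 0$. The inclusion $D(Q) \subseteq D(Q^{(N)})$ with $Q = Q^{(N)}$ on $D(Q)$ gives the \emph{operator} ordering $R_\alpha \leq R_\alpha^{(N)}$ in the sense of quadratic forms, i.e., $\langle R_\alpha f,f\rangle \leq \langle R_\alpha^{(N)} f,f\rangle$, but this does \emph{not} imply the pointwise domination $R_\alpha f \leq R_\alpha^{(N)} f$ for $f \geq 0$, which is what you need. Pointwise domination of positivity-preserving semigroups is characterised by an Ouhabaz-type ideal criterion, not by mere form inclusion. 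In fact the paper itself remarks (just after Theorem~\ref{thm:Neumann}) that the Neumann semigroup does \emph{not} in general give the largest positive solution among the intermediate Dirichlet forms, so for some choices of $L$ the inequality you assert will fail.

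The detour through $P_t^{(N)}$ is moreover unnecessary: if you define $g = R_\alpha f$ using the resolvent of $L$ itself on $\ell^1$, then your Gronwall argument (with $P_t$ in place of $P_t^{(N)}$) would go through directly, subject to the domain bookkeeping you already flagged. Even so, this route is considerably heavier than what is needed. The paper's proof dispenses with supersolutions and Gronwall entirely: it simply writes
\[
\sup_{t\in[0,T]}|P_t\varphi(x)-\varphi(x)| \leq \int_0^T P_s|\LL\varphi|(x)\,ds,
\]
takes the $\ell^1$-norm, applies Tonelli and the contractivity of $P_s$ on $\ell^1$ to bound the right-hand side by $T\|\LL\varphi\|_1 < \infty$, and is done in three lines. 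Your observation that $\LL\varphi \in \ell^1(X,m)$ is exactly the right ingredient; the rest of your construction is superfluous.
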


\begin{proof}
	Let $x \in X$ and  $t \in [0,T]$.
	We note that the semigroup generates a solution to the heat equation.
	Moreover, the semigroup commutes with the Laplacian and $| P_s \varphi | \leq P_s |\varphi|$ 
	for all $s \geq 0$ 
	as we assume that the associated form is a Dirichlet form and, thus,
	the semigroup is positivity preserving. Using these properties, we calculate
	\begin{align*}
	\sup_{t \in [0,T]}
	|P_t \varphi(x) - \varphi(x)| = 	\sup_{t \in [0,T]}  \left|\int_0^t \LL P_s \varphi(x) ds \right| \leq \int_0^T  P_s |\LL \varphi| (x) ds.
	\end{align*}
	Taking the $\ell^1$ norm and applying Tonelli's theorem gives
	\begin{align*}
	\|\sup_{t \in [0,T]} P_t \varphi - \varphi\|_1 \leq  \int_0^T  \left \|P_s |\LL \varphi| \right\|_1 ds \leq T \|\LL \varphi \|_1 < \infty
	\end{align*}
	due to the compact support of $\varphi$ and the fact that 
	$\LL(C_c(X)) \subseteq \ell^1(X,m)$ which follows
by a direct calculation. Since  $\|\varphi\|_1 < \infty$, this implies
	$\sup_{t \in [0,T]} P_t \varphi \in \ell^1(X,m)$ which finishes the proof.
\end{proof}

We now state and prove the main result of this section. For this, we drop
the killing term and assume local finiteness.

\begin{theorem}\label{thm:ECfeller}
Let $b$ be a locally finite connected graph over $(X,m)$ satisfying the edge condition (EC). 
Then, the Neumann semigroup is Feller.
\end{theorem}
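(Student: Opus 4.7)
The plan is to verify the Feller property directly at the level of $P_{t}^{(N)}\varphi$ for $\varphi \in C_c(X)$ by combining Lemma~\ref{l:uniformEll1} with a pointwise Duhamel representation of the heat equation. The edge condition will be used to convert an unwieldy transition rate into a manageable measure-weighted quantity, and local finiteness will supply decay along any sequence escaping to infinity.

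First I would fix $\varphi \in C_c(X)$ with $\varphi \geq 0$ (by linearity it suffices to treat positive $\varphi$) and fix $T>0$, and set $u_t(x) = P_{t}^{(N)}\varphi(x)$ together with $g := \sup_{t \in [0,T]} u_t$, which lies in $\ell^1(X,m)$ by Lemma~\ref{l:uniformEll1}. Next I would rewrite the heat equation at a single vertex $x$ as the scalar ODE
\[
\partial_t u_t(x) + \Deg(x) u_t(x) = \frac{1}{m(x)} \sum_y b(x,y) u_t(y),
\]
which is legitimate since $\varphi \in D(L^{(N)})$ and $L^{(N)}$ is a restriction of $\LL$. Duhamel's formula (variation of parameters in $t$) then gives, for every $x \notin \supp(\varphi)$,
\[
u_t(x) \;=\; \int_0^t e^{-(t-s)\Deg(x)}\,\frac{1}{m(x)}\sum_y b(x,y) u_s(y)\,ds \;\leq\; \int_0^t \frac{1}{m(x)}\sum_y b(x,y) u_s(y)\,ds.
\]

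Now I would invoke the edge condition in its equivalent form $b(x,y)/m(x) \leq C m(y)$, and then use the pointwise bound $u_s(y) \leq g(y)$, restricting the sum to the finite neighborhood $N(x) = \{y : b(x,y) > 0\}$, to conclude
\[
u_t(x) \;\leq\; C t \sum_{y \in N(x)} g(y) m(y).
\]
For a sequence $(x_n)$ that leaves every finite set, local finiteness gives that each fixed $y$ belongs to $N(x_n)$ (equivalently, $x_n \in N(y)$, a finite set) for only finitely many $n$. Hence $\mathbf{1}_{\{y \in N(x_n)\}} \to 0$ pointwise in $y$ and is dominated by $1$, so dominated convergence against the summable weight $g \cdot m$ yields $\sum_{y \in N(x_n)} g(y) m(y) \to 0$, and therefore $u_t(x_n) \to 0$.

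I expect the main conceptual step to be recognizing that (EC), in the form $b(x,y)/m(x) \leq C m(y)$, is precisely what transforms the transition-rate sum $m(x)^{-1}\sum_y b(x,y)(\cdot)$ into a pairing against the measure $m$, so that the $\ell^1$ bound from Lemma~\ref{l:uniformEll1} can be applied. A secondary technical check is the justification of the pointwise Duhamel identity, but this is routine: $\varphi \in C_c(X) \subseteq D(L^{(N)})$, the semigroup evolution holds in $\ell^2$, and restriction of $L^{(N)}$ to $\LL$ allows pointwise evaluation, so $t \mapsto u_t(x)$ is $C^1$ and satisfies the scalar ODE above.
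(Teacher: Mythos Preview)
Your proof is correct and follows essentially the same approach as the paper's: both use Lemma~\ref{l:uniformEll1} to obtain $g\in\ell^1(X,m)$, invoke (EC) to turn $m(x)^{-1}\sum_y b(x,y)\,(\cdot)$ into a pairing against $m$, and then appeal to local finiteness to conclude. The only cosmetic differences are that the paper writes $P_T\varphi(x)=-\int_0^T \LL P_t\varphi(x)\,dt$ directly and drops the nonpositive $-P_t\varphi(x)$ term rather than passing through the Duhamel kernel $e^{-(t-s)\Deg(x)}$, and it organizes the endgame as an $\eps$--$K$ argument (choosing finite $K$ with $\|g(1-1_K)\|_1<\eps$ and working outside $\mbox{cl}(K)$) rather than your sequential DCT argument on $1_{\{y\in N(x_n)\}}$.
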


\begin{proof}
	Let $T>0$ and $\varphi \in C_c(X)$ be non-negative.
	We aim to show $P_t \varphi \in C_0(X)$ where $P_t=P_{t}^{(N)}$ is the Neumann semigroup.
	Due to the edge condition (EC), there exists $C>0$ such that $b(x,y) \leq Cm(x)m(y)$ for all $x,y \in X$.	
	Let
	\[
	g= \sup_{t \in [0,T]} P_t \varphi \in \ell^1(X,m)
	\]
	where $g \in \ell^1(X,m)$ due to Lemma~\ref{l:uniformEll1}.
	Let $\eps>0$ and $K \subset X$ be finite such that
	\[
	\|g(1-1_K)\|_1 < \eps.
	\]
	 Without loss of generality, we also assume $\supp \varphi \subseteq K$.

	Now fix $x \in X \setminus \mbox{cl}(K)$ where $\mbox{cl}(K) = K \cup \{ z \mid 
	\mbox{there exists } y \in K \mbox{ such that } b(y,z)>0 \}$. By local
	finiteness, we note that $\mbox{cl}(K)$ is a finite set.
	For $t \in [0,T]$, we estimate
	\begin{align*}
	-\LL P_t \varphi (x) &= \frac 1 {m(x)}\sum_{y \in X \setminus K} b(x,y)(P_t \varphi(y) - P_t \varphi(x))
	\\&\leq  \frac 1 {m(x)}\sum_{y \in X \setminus K} b(x,y)P_t \varphi(y)
	\\&\leq  C \sum_{y \in X \setminus K} m(y) P_t \varphi(y)
	\\&\leq  C \sum_{y \in X \setminus K} m(y) g(y)
	\\& \leq  C \|g(1-1_K)\|_1 < C \eps
	\end{align*}
	where we used (EC) in the second estimate.
	Hence,
	\begin{align*}
	P_T \varphi(x) = P_T \varphi(x) - P_0 \varphi(x) = -\int_0^T \LL P_t \varphi(x) dt \leq TC\eps.
	\end{align*} 
	In particular, $P_T \varphi \leq TC\eps$ on $X \setminus \mbox{cl}(K)$ which proves that $P_t$ is Feller since $\eps$ is arbitrary and since 
	$\mbox{cl}(K)$ is finite. This finishes the proof.
\end{proof}

Combining the result above with Corollary~\ref{cor:Feller_uniqueness}, we obtain the following result.

\begin{corollary}[Edge condition, form uniqueness and the Feller property]\label{cor:ECdirichlet}
Let $b$ be a locally finite connected graph over $(X,m)$ satisfying the edge condition (EC). 
Then, the Dirichlet semigroup is Feller, coincides with the Neumann semigroup
and the restricted Neumann semigroups converge to the Dirichlet
semigroup strongly in $\ell^2(X,m)$ for any exhaustion.
\end{corollary}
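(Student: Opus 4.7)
The plan is to simply chain together the two results referenced in the statement, since this corollary is a direct combination of Theorem~\ref{thm:ECfeller} and Corollary~\ref{cor:Feller_uniqueness}, with no new input required.

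First, I would invoke Theorem~\ref{thm:ECfeller}, which, under the standing hypotheses of local finiteness, connectedness, and the edge condition (EC), yields that the Neumann semigroup $P_t^{(N)}$ is Feller. Next, I would apply Corollary~\ref{cor:Feller_uniqueness} to transfer this property to the Dirichlet side: the equivalence stated there says that the Neumann semigroup being Feller implies both that the Dirichlet semigroup is Feller and that $Q^{(D)} = Q^{(N)}$. Since the two forms agree, the associated operators and therefore the two semigroups $P_t^{(D)}$ and $P_t^{(N)}$ coincide for all $t \geq 0$.

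Finally, for the convergence statement, I would apply Corollary~\ref{cor:form_uniqueness}: the equivalence (i) $\Longleftrightarrow$ (ii) there tells us that $Q^{(D)} = Q^{(N)}$ is equivalent to the strong convergence $P_{t,k}^{(N)} \to P_t^{(D)}$ in $\ell^2(X,m)$ for any exhaustion $(X_k)$ and any $t > 0$. Since we have already established form uniqueness in the previous step, this convergence is immediate.

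There is no real obstacle here — the work was done in Theorem~\ref{thm:ECfeller} and the structural equivalences of Corollaries~\ref{cor:form_uniqueness} and~\ref{cor:Feller_uniqueness}. The only thing to be slightly careful about is to note explicitly that the corollary's hypotheses (locally finite, connected, no killing term in~(EC) as formulated with just~$b$) match the hypotheses needed for Theorem~\ref{thm:ECfeller}, so that the ``in particular'' clause of Corollary~\ref{cor:Feller_uniqueness} applies without extra assumptions.
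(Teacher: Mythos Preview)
Your proposal is correct and matches the paper's approach exactly: the paper simply states that the corollary follows by combining Theorem~\ref{thm:ECfeller} with Corollary~\ref{cor:Feller_uniqueness}, and the convergence claim is precisely the ``in particular'' clause of the latter (which in turn rests on Corollary~\ref{cor:form_uniqueness}, as you note).
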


\section{Birth-death chains}
We now consider the case of birth-death chains, i.e., graphs with $X=\N_0$, $b(x,y)>0$ if and only if $|x-y|=1$
and $c=0$. In this case we can characterize the Feller property of the Neumann semigroup in terms of the edge weights and vertex measure. We also show that
the Feller property of the semigroup implies the essential self-adjointness of the Laplacian.

We recall that if $\LL(C_c(X)) \subseteq \ell^2(X,m)$, then the operator $L_c = \LL\vert_{C_c(X)}$
is symmetric. It is easy to see that this is always the case for locally finite graphs, in particular,
for birth-death chains.
 If $L_c$ has a unique self-adjoint extension, then we call $L_c$ 
\emph{essentially self-adjoint}. It follows by general abstract principles that essential self-adjointness
always implies form uniqueness, e.g., Corollary~3.7 in Section~3.2 of \cite{KLW21}. We will show below
that a stronger property, that is, the Feller property of the Neumann semigroup implies essential self-adjointness
in the case of birth-death chains.

We start with a lemma establishing some basic properties 
of $\alpha$-harmonic functions on birth-death chains. These will be used
in both results below.

\begin{lemma}\label{lem:bd_harm}
Let $b$ be a birth-death chain over $(\N_0,m)$.
Let $u$ be $\alpha$-harmonic for $\alpha>0$.
If $u$ is non-trivial, then $u(0) \neq 0$. If $u(0)>0$, then $u$
is strictly increasing and 
$$ \sum_{r=0}^\infty u(r)m(r) \geq \widetilde{\alpha} \sum_{r=0}^\infty \frac{m(B_r^c)} {b(r,r+1)}$$
where   $\widetilde{\alpha}=\alpha u(0) m(0)$
and  $B_r^c = \{r+1,r+2, r+3, \ldots\}$.
In particular, if $u \in \ell^1(X,m)$,
then  $\sum_{r=0}^\infty {m(B_r^c)}/{b(r,r+1)} < \infty$.
\end{lemma}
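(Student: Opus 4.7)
The plan is to exploit the one-dimensional structure: the $\alpha$-harmonic equation on $\mathbb{N}_0$ becomes a second-order linear recurrence that is uniquely solved from the two initial values $u(0), u(1)$, and the boundary equation at $x=0$ pins $u(1)$ in terms of $u(0)$. Writing out $(\LL+\alpha)u(x)=0$ gives, for $x\geq 1$,
\[
b(x,x-1)(u(x)-u(x-1)) + b(x,x+1)(u(x)-u(x+1)) + \alpha u(x) m(x) = 0,
\]
and at $x=0$ simply $b(0,1)(u(1)-u(0)) = \alpha u(0) m(0)$.

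First I would handle the dichotomy $u(0)\neq 0$. If $u(0)=0$, then the boundary equation forces $u(1)=0$, and then the recurrence at $x=1,2,\dots$ gives $u(x+1)=0$ whenever $u(x-1)=u(x)=0$, so $u\equiv 0$. Next, assuming $u(0)>0$, the boundary equation gives $u(1)-u(0)=\tfrac{\alpha m(0)}{b(0,1)}u(0)>0$, and an induction on $x$ using the rearranged recurrence
\[
b(x,x+1)(u(x+1)-u(x)) = b(x,x-1)(u(x)-u(x-1)) + \alpha u(x) m(x)
\]
shows strict monotonicity: if the previous increment is positive and $u(x)>0$, so is the next.

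The key step is the integral-of-the-equation trick. Define $\delta(x):=b(x,x+1)(u(x+1)-u(x))$; the recurrence reads $\delta(x)=\delta(x-1)+\alpha u(x)m(x)$ with $\delta(0)=\widetilde\alpha=\alpha u(0)m(0)$, so by telescoping
\[
\delta(x) = \alpha \sum_{k=0}^{x} u(k) m(k) \geq \widetilde\alpha.
\]
Hence $u(x+1)-u(x) \geq \widetilde\alpha/b(x,x+1)$, and summing from $0$ to $r-1$ yields $u(r) \geq \widetilde\alpha \sum_{k=0}^{r-1} 1/b(k,k+1)$ for $r\geq 1$. Multiplying by $m(r)$, summing in $r$, and swapping the order of summation (Tonelli, since everything is nonnegative) gives
\[
\sum_{r=0}^\infty u(r)m(r) \geq \widetilde\alpha \sum_{k=0}^\infty \frac{1}{b(k,k+1)}\sum_{r=k+1}^\infty m(r) = \widetilde\alpha \sum_{k=0}^\infty \frac{m(B_k^c)}{b(k,k+1)},
\]
which is the claimed inequality. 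The ``in particular'' assertion then follows since $\widetilde\alpha>0$ when $u(0)>0$, so $u\in\ell^1(X,m)$ forces the right-hand series to be finite.

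There is no real obstacle here beyond organizing the bookkeeping; the only mild subtlety is being careful to use the $x=0$ boundary equation (which has only one neighbor term) to seed the telescoping and to justify the swap of summations via nonnegativity of all summands. The argument is entirely elementary once the difference $\delta(x)$ is introduced.
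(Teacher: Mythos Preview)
Your proof is correct and follows essentially the same route as the paper: use the boundary equation at $0$ and the recurrence to show monotonicity, then bound the telescoping increments $b(r,r+1)(u(r+1)-u(r))$ below by $\widetilde\alpha$, sum to get $u(r)\geq \widetilde\alpha\sum_{k<r}1/b(k,k+1)$, and swap the order of summation. The only cosmetic difference is that you record the exact identity $\delta(x)=\alpha\sum_{k\le x}u(k)m(k)$ before passing to the lower bound, whereas the paper simply notes $\LL u\le 0$ to get the monotonicity of the increments; the content is identical.
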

\begin{proof}
If $u(0)=0$, then
$(\LL +\alpha) u  =0$ implies $u=0$ by induction. 

Also by induction, if $u(0)>0$, then $(\LL +\alpha) u  =0$ gives that
$u$ is strictly increasing. 
Furthermore, as now $\LL u \leq 0$, we obtain
$$ b(r,r+1)\large(u(r+1)-u(r)\large) \geq b(r-1,r)\large(u(r)-u(r-1)\large)$$
for all $r \in \N$.
Applying this estimate inductively yields
\begin{align*} 
b(r,r+1)(u(r+1)-u(r)) &\geq b(0,1)(u(1)-u(0))\\ 
&= -m(0)\LL u(0) = \alpha u(0)m(0)=\widetilde{\alpha}.
\end{align*}
Thus, $u(r+1) - u(r) \geq {\widetilde{\alpha}}/{b(r,r+1)}$ yielding
$$ u(r) \geq \widetilde{\alpha} \sum_{k=0}^{r-1} \frac {1}{b(k,k+1)}$$
for $r \geq 1$.
Putting everything together and rearranging gives
$$ \sum_{r=0}^\infty u(r)m(r) \geq \widetilde{\alpha} \sum_{r=1}^\infty \sum_{k=0}^{r-1} \frac{m(r)}{b(k,k+1)} = 
\widetilde{\alpha} \sum_{r=0}^\infty \frac{\sum_{k= r+1}^\infty m(k)} {b(r,r+1)} = \widetilde{\alpha} \sum_{r=0}^\infty \frac{m(B_r^c)} {b(r,r+1)}.$$
The ``in particular'' statement is then obvious. 
\end{proof}

We now characterize the Feller property of the Neumann semigroup on birth-death chains via
$\alpha$-harmonic functions in $\ell^1$ as well as conditions on the edge weights and vertex measure.
This combines our previous results, including the lemma directly above, 
with some known characterizations for form uniqueness and the Feller property for the Dirichlet semigroup
and complements known characterizations on birth-death chains for stochastic completeness
and recurrence.

\begin{theorem}\label{thm:BDchar}
	Let $b$ be a birth-death chain over $(\N_0,m)$. The following statements are equivalent:
	\begin{itemize}
		\item[\textup{(i)}]  The Neumann semigroup is Feller.
		\item[\textup{(ii)}]  Every $\alpha$-harmonic function in $\ell^1(X,m)$ for $\alpha>0$ is trivial.
		\begin{itemize}
		\item[\textup{(ii$'$)}] Every positive $\alpha$-harmonic function in $\ell^1(X,m)$ for $\alpha>0$ is trivial.
		\end{itemize}
		\item[\textup{(iii)}]  We have $m(X) = \infty$ or 
		\[
		\sum_r \frac 1{b(r,r+1)} = \infty =  \sum_r \frac{m(B_r^c)} {b(r,r+1)}
		\]
		where $B_r^c = \{r+1,r+2, r+3, \ldots\}$.
	\end{itemize}
\end{theorem}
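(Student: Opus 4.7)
The plan is to prove the chain $(ii) \Leftrightarrow (ii') \Leftrightarrow (iii)$ using Lemma~\ref{lem:bd_harm} together with an explicit recursive construction, and then to connect $(ii')$ to $(i)$ via Theorem~\ref{thm:harmonic_Feller} and Corollary~\ref{cor:finite_measure}.

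For $(ii) \Leftrightarrow (ii')$, I would invoke Lemma~\ref{lem:bd_harm}: any non-trivial $\alpha$-harmonic $u$ has $u(0) \neq 0$, so replacing $u$ by $-u$ if necessary reduces to positive $u$. For $(iii) \Rightarrow (ii')$, I would take a positive $\alpha$-harmonic $u$ and observe that it is strictly increasing with $u \geq u(0) > 0$, so $\|u\|_1 \geq u(0)m(X)$ is infinite when $m(X) = \infty$; otherwise the lemma's lower bound $\|u\|_1 \geq \widetilde{\alpha} \sum_r m(B_r^c)/b(r,r+1)$ is already infinite by the alternative clause of (iii). Either way $u \notin \ell^1(X,m)$.

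The heart of the proof is the reverse implication $\neg(iii) \Rightarrow \neg(ii')$. Assuming $m(X) < \infty$ and $C := \sum_r m(B_r^c)/b(r,r+1) < \infty$, I would start from $u(0) = 1$ and use $(\LL+\alpha)u = 0$ to recursively define a positive function $u$. Writing $v(r) := u(r+1) - u(r)$ and $S_r := \sum_{k=0}^r u(k) m(k)$, multiplying the $\alpha$-harmonicity relation by $m(r)$ and telescoping gives $b(r,r+1)\,v(r) = \alpha S_r$. Substituting $u(r) = 1 + \sum_{k<r} v(k)$ into the definition of $S_N$ and interchanging the double sum yields
\[
S_N \;=\; m(B_N) \,+\, \alpha \sum_{k=0}^{N-1} \frac{S_k \bigl(m(B_k^c) - m(B_N^c)\bigr)}{b(k,k+1)}.
\]
Bounding $S_k \leq S_N$ and $m(B_N) \leq m(X)$ gives $S_N(1 - \alpha C) \leq m(X)$, so for any $\alpha \in (0,1/C)$ the sequence $S_N$ is uniformly bounded; hence $u \in \ell^1(X,m)$. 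This non-trivial positive $\alpha$-harmonic function in $\ell^1$ contradicts (ii').

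Finally, $(ii') \Rightarrow (i)$ follows directly from the locally finite version of Theorem~\ref{thm:harmonic_Feller}, while $(i) \Rightarrow (ii')$ splits by the measure. When $m(X) = \infty$, (ii') holds automatically from the $(iii) \Rightarrow (ii')$ argument, independently of (i). When $m(X) < \infty$, the graph is locally finite with finite measure, so Corollary~\ref{cor:finite_measure} directly gives the equivalence of (i) and (ii'). The main obstacle I anticipate is the Abel-summation self-consistency estimate in the construction step; bounding $S_N$ in terms of itself is the delicate analytic computation that forces $u \in \ell^1$ for small $\alpha$ whenever (iii) fails, and everything else in the proof is comparatively soft.
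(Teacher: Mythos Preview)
Your proof is correct, and the explicit construction in the step $\neg(iii)\Rightarrow\neg(ii')$ is clean: the telescoping identity $b(r,r+1)v(r)=\alpha S_r$ and the Abel-summation bound $S_N(1-\alpha C)\leq m(X)$ go through exactly as you outline, yielding a nontrivial positive $\alpha$-harmonic function in $\ell^1$ for any $\alpha<1/C$.

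Your route differs from the paper's in one structural respect. The paper does \emph{not} prove $(ii')\Leftrightarrow(iii)$ directly; instead it establishes $(i)\Leftrightarrow(iii)$ by quoting two external characterizations for birth-death chains --- the Dirichlet Feller criterion from \cite{Woj17} and the form-uniqueness criterion (via essential self-adjointness when $m(X)=\infty$, and via recurrence $\Leftrightarrow\sum_r 1/b(r,r+1)=\infty$ when $m(X)<\infty$) --- and then combines these through Corollary~\ref{cor:Feller_uniqueness}. The remaining implications $(iii)\Rightarrow(ii)\Rightarrow(ii')\Rightarrow(i)$ close the loop. So the paper never needs to \emph{construct} an $\ell^1$ $\alpha$-harmonic function; that existence is obtained only implicitly, through the failure of the Dirichlet Feller or form-uniqueness criteria.

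Your approach trades those external citations (Woj17, the recurrence criterion) for a hands-on Gronwall-type estimate, and leans on Corollary~\ref{cor:finite_measure} for the finite-measure case of $(i)\Rightarrow(ii')$. The upside is a more self-contained argument with an explicit witness when $(iii)$ fails; the paper's approach is shorter once the cited characterizations are granted and makes the decomposition ``Neumann Feller $=$ Dirichlet Feller $+$ form uniqueness'' visible at the level of the concrete criteria.
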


\begin{proof}
That (i) $\Longleftrightarrow$ (iii) follows from the following characterizations of the Feller property for the Dirichlet
semigroup and form uniqueness on birth-death chains. First, by Theorem~4.13 in \cite{Woj17}, the Dirichlet
semigroup is Feller if and only if 
$$\sum_{r} \frac{1}{b(r,r+1)} < \infty \qquad \textup{ or } \qquad
\sum_r \frac{m(B_r^c)} {b(r,r+1)}=\infty.$$
On the other hand, form uniqueness holds if and only if 
$$\sum_{r} \frac{1}{b(r,r+1)} = \infty \qquad \textup{ or } \qquad  m(X)=\infty.$$ 
This can be seen follows:
If $m(X)=\infty$, then $L_c$ is essentially self-adjoint by Theorem~6 in \cite{KL12} and thus form uniqueness holds by general principles, e.g.,
Corollary~3.7 in Section~3.2 of \cite{KLW21}.
If $m(X)<\infty$, then form uniqueness is equivalent to recurrence by Theorem~16 in \cite{Sch17} and it is well-known that
recurrence on birth-death chains is equivalent to $\sum_r 1/b(r,r+1) =\infty$, 
e.g., Theorem~5.9 in \cite{Woe09}  or Theorem~9.21 in Section~9.3 of \cite{KLW21}.
Now, the equivalence of (i) and (iii) follows by combining the 
above characterizations and using Corollary~\ref{cor:Feller_uniqueness}.

That (ii) $\Longrightarrow$ (ii$'$) is obvious
and that (ii$'$) $\Longrightarrow$ (i) 
follows from Theorem~\ref{thm:harmonic_Feller}
as birth-death chains are locally finite.

We now prove (iii) $\Longrightarrow$ (ii). We argue by contraposition so
suppose there exists a non-trivial $u \in \ell^1(X,m)$ with $(\LL +\alpha) u  =0$ for $\alpha>0$. By Lemma~\ref{lem:bd_harm}, we know that $u(0) \neq 0$
and by rescaling we may assume $u(0)>0$. 
Thus, as $u$ is then strictly increasing
we obtain $m(X)<\infty$ from $u \in \ell^1(X,m)$ and $\sum_r {m(B_r^c)}/{b(r,r+1)}<\infty$
from Lemma~\ref{lem:bd_harm}. This completes the proof of the theorem. 
\end{proof}

As a consequence, we now show that the Feller property of the Neumann semigroup implies essential self-adjointness on birth-death chains.
We recall that
$L_c$ is essentially self-adjoint if and only if every $\alpha$-harmonic
function in $\ell^2(X,m)$ for $\alpha>0$ is trivial, e.g., Theorem~3.6 in Section~3.2 of \cite{KLW21}.
Using this and the results above yields the following
statement.

\begin{corollary}
	Let $b$ be a birth-death chain over $(X,m)$. 
	If the Neumann semigroup is Feller, then $L_c$ is essentially self-adjoint.
\end{corollary}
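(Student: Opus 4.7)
The plan is to combine the essential self-adjointness criterion, the characterization of the Feller property from Theorem~\ref{thm:BDchar}, and the growth/integrability estimates for $\alpha$-harmonic functions from Lemma~\ref{lem:bd_harm}. By the remark stated just before the corollary, essential self-adjointness of $L_c$ is equivalent to the triviality of every $\alpha$-harmonic function in $\ell^2(X,m)$ for $\alpha>0$. So I would argue by contradiction: assume the Neumann semigroup is Feller and suppose that $u \in \ell^2(X,m)$ is a non-trivial solution of $(\LL+\alpha)u=0$.

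By Lemma~\ref{lem:bd_harm}, $u(0)\neq 0$; after replacing $u$ by $-u$ if necessary, I may assume $u(0)>0$, and then the same lemma tells me that $u$ is strictly increasing. Next I would invoke Theorem~\ref{thm:BDchar}: since the Neumann semigroup is Feller, either $m(X)=\infty$, or both $\sum_r 1/b(r,r+1)=\infty$ and $\sum_r m(B_r^c)/b(r,r+1)=\infty$ hold. I split into these two cases.

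In the first case, $m(X)=\infty$, the strict monotonicity gives $u(r)\geq u(0)>0$ for all $r\in\N_0$, so
\[
\|u\|_2^2 \;\geq\; u(0)^2\, m(X) \;=\;\infty,
\]
contradicting $u\in\ell^2(X,m)$. In the second case, $m(X)<\infty$, I would use the elementary Cauchy--Schwarz estimate $\|f\|_1\leq m(X)^{1/2}\|f\|_2$, valid on finite-measure spaces, to conclude that $u\in\ell^1(X,m)$. But then the ``in particular'' clause of Lemma~\ref{lem:bd_harm} forces $\sum_r m(B_r^c)/b(r,r+1)<\infty$, contradicting the hypothesis of this case. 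Either way we reach a contradiction, and hence there is no non-trivial $\alpha$-harmonic $u\in\ell^2(X,m)$, so $L_c$ is essentially self-adjoint.

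Since the three ingredients (the $\ell^2$ criterion for essential self-adjointness, Theorem~\ref{thm:BDchar}, and Lemma~\ref{lem:bd_harm}) are already in place, there is no real obstacle beyond organizing the case split. The only point that deserves attention is the Cauchy--Schwarz step in the finite-measure case, which is what bridges the $\ell^2$ assumption with the $\ell^1$-based lemma and closes the argument cleanly.
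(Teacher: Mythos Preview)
Your proof is correct and follows essentially the same approach as the paper's: both use the $\ell^2$ criterion for essential self-adjointness, Lemma~\ref{lem:bd_harm}, and Theorem~\ref{thm:BDchar} in the same way. The only cosmetic differences are that the paper argues by contraposition rather than contradiction (deriving $m(X)<\infty$ directly from the existence of the increasing $\ell^2$ function instead of splitting on the Feller characterization), and it passes from $\ell^2$ to $\ell^1$ via $u\geq 1$ rather than Cauchy--Schwarz.
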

\begin{proof}
If $L_c$ is not essentially self-adjoint, then there exists 
a non-trivial $u \in \ell^2(X,m)$ with $(\LL+\alpha) u = 0$ for $\alpha>0$.
By Lemma~\ref{lem:bd_harm} and rescaling we assume, 
without loss of generality, that $u(0)=1$. As $u$ must then be strictly increasing, 
this implies $m(X)<\infty$. Furthermore, as $\|u\|_2 \geq \|u\|_1$,
Lemma~\ref{lem:bd_harm} implies 
$\sum_{r=0}^\infty {m(B_r^c)}/{b(r,r+1)}<\infty$ and
thus the Neumann semigroup is not Feller by Theorem~\ref{thm:BDchar}.
\end{proof}

\begin{rem}
The corollary above can also be obtained from Theorem~\ref{thm:BDchar} 
and the following characterization
of essential self-adjointness for the Laplacian on birth-death chains due to Hamburger 
\cite{Ham20a, Ham20b}, see also \cite{IMW}. 
Namely, on birth-death chains, the essential self-adjointness
of $L_c$ is equivalent to
$$\sum_{r=0}^\infty \left( \sum_{k=0}^r \frac{1}{b(k,k+1)} \right)^2 m(r+1)
=\infty.$$
This also shows that the converse to the above corollary does not hold.
\end{rem}

\subsection*{Acknowledgments}
R.K.W.~would like to thank Isaac Chavel, J{\'o}zef Dodziuk and Leon Karp for a discussion that 
led to his interest in the Neumann exhaustion. Furthermore, he would like to thank Daniel Lenz,
Marcel Schmidt and Hendrik Vogt for helpful exchanges and the organizers of the ISEM 26 workshop
where this work was first presented. Finally, he acknowledges financial support from the Simons Foundation
in the form of a Travel Support for Mathematicians gift. 
R.K.W.~and~F.M.~are grateful to Christian Rose for helpful discussions on the topics of this paper.
M.K.~acknowledges financial support of the DFG.
\\

\printbibliography

Matthias Keller,\\
Department of Mathematics,
University of Potsdam, Potsdam, Germany\\
\texttt{mkeller@math.uni-potsdam.de}\\

Florentin M{\"u}nch, \\
Max Planck Institute for Mathematics in the Sciences Leipzig, Germany \\
\texttt{cfmuench@gmail.com}\\

Rados{\l}aw K. Wojciechowski,\\
York College and the Graduate Center of the City University of New
York, New York, USA\\
\texttt{rwojciechowski@gc.cuny.edu}

\end{document}